\numberwithin{equation}{section}
\numberwithin{figure}{section}
\theoremstyle{plain}
\newtheorem{thm}{\protect\theoremname}[section]
  \theoremstyle{definition}
  \newtheorem{defn}[thm]{\protect\definitionname}
  \theoremstyle{plain}
  \newtheorem{prop}[thm]{\protect\propositionname}
  \theoremstyle{plain}
  \newtheorem{lem}[thm]{\protect\lemmaname}
  \theoremstyle{remark}
  \newtheorem*{rem*}{\protect\remarkname}
  \theoremstyle{plain}
  \newtheorem{cor}[thm]{\protect\corollaryname}
  \theoremstyle{remark}
  \newtheorem*{acknowledgement*}{\protect\acknowledgementname}
\DeclareMathOperator{\ad}{ad}
\DeclareMathOperator{\im}{Im}
\DeclareMathOperator{\Hom}{Hom}
\DeclareMathOperator{\Aut}{Aut}
\DeclareMathOperator{\T}{T}
\DeclareMathOperator{\N}{N}
\DeclareMathOperator{\C}{C}
\DeclareMathOperator{\Spec}{Spec}
\DeclareMathOperator{\chara}{char} % The operator \char is already taken
\newcommand{\iso}{\mathbin{\kern.15em\widetilde{\hphantom{\hspace{.6em}}}\kern-.98em\rightarrow\kern.05em}}
\newcommand{\longiso}{\mathbin{\kern.3em\widetilde{\hphantom{\hspace{1.1em}}}\kern-1.55em\longrightarrow\kern.1em}}
\newcommand{\xyC}[1]{%
\makeatletter
\xydef@\xymatrixcolsep@{#1}
\makeatother
} % end of \xyC
\theoremstyle{plain}
\newtheorem*{thm1}{Theorem \ref{thm:Cartan}}
\newtheorem*{cor1}{Corollary \ref{cor:F - Norm}}
\date{}
  \providecommand{\acknowledgementname}{Acknowledgement}
  \providecommand{\corollaryname}{Corollary}
  \providecommand{\definitionname}{Definition}
  \providecommand{\lemmaname}{Lemma}
  \providecommand{\propositionname}{Proposition}
  \providecommand{\remarkname}{Remark}
\providecommand{\theoremname}{Theorem}
\begin{document}

\title{Reductive group schemes, the Greenberg functor, and associated algebraic
groups}

\author{Alexander Stasinski}

\address{School of Mathematics\\
University of Southampton\\
Southampton, SO17 1BJ\\
UK}

\email{\texttt{a.stasinski@soton.ac.uk}}
\begin{abstract}
Let $A$ be an Artinian local ring with algebraically closed residue
field $k$, and let $\mathbf{G}$ be an affine smooth group scheme
over $A$. The Greenberg functor $\mathcal{F}$ associates to $\mathbf{G}$
a linear algebraic group $G:=(\mathcal{F}\mathbf{G})(k)$ over $k$,
such that $G\cong\mathbf{G}(A)$. We prove that if $\mathbf{G}$ is
a reductive group scheme over $A$, and $\mathbf{T}$ is a maximal
torus of $\mathbf{G}$, then $T$ is a Cartan subgroup of $G$, and
every Cartan subgroup of $G$ is obtained uniquely in this way. Moreover,
we prove that if $\mathbf{G}$ is reductive and $\mathbf{P}$ is a
parabolic subgroup of $\mathbf{G}$, then $P$ is a self-normalising
subgroup of $G$, and if $\mathbf{B}$ and $\mathbf{B}'$ are two
Borel subgroups of $\mathbf{G}$, then the corresponding subgroups
$B$ and $B'$ are conjugate in $G$.
\end{abstract}
\maketitle

\section*{Errata notes}

A previous version of the present paper has appeared in J.~Pure Appl.~Algebra.,
\textbf{216} (2012), 1092-1101. After the publication the author was
notified by Cristian D. Gonz\'alez-Avil\'es and Alessandra Bertapelle
that the formula on p.~1094, l.~14 in the published version does
not hold in general. More precisely, in \cite{greenberg1}, p.~636
Greenberg defined the local ring scheme $\mathbf{A}$ over $k$ (using
different notation). However, the formula 
\[
\mathbf{A}(R)=A\otimes_{W_{m}(k)}W_{m}(R)
\]
does not hold for all $k$-algebras $R$. Nevertheless, it does hold
when $R$ is a \emph{perfect }$k$-algebra. The formula occurs in
\cite{Neron_models}, p.~276, l.~-18, but was stated correctly by
Loeser and Sebag \cite{Loeser-Sebag}, p.~318. Moreover, Nicaise
and Sebag have given counter-examples for non-perfect algebras; see
\cite{Nicaise-Sebag}, 2.2.

In the present paper we have corrected the statements involving the
above formula by either removing the formula or adding the hypothesis
that $R$ is a perfect $k$-algebra. The corrections correspond to
p.~1094, l.~14 and l.~-17, as well as the second line of the proof
of Lemma~2.3, in the published version. The formula is actually not
necessary for the results of our paper, and can simply be ignored.
All that is needed is that $\mathbf{A}$ is an affine local ring scheme
over $k$, and that it is isomorphic to some affine space. As mentioned
above, these facts were established by Greenberg.

Apart from these modifications, the content of the present version
remains identical to the published version.

\section{Introduction}

Ever since the work of Steinberg \cite{Steinberg-End} and Deligne
and Lusztig \cite{delignelusztig}, it has been known that the structure
of connected reductive algebraic groups plays an important role in
the representation theory of finite groups of Lie type (i.e., reductive
groups over finite fields). More recently, generalisations of the
construction of Deligne and Lusztig to reductive groups over finite
local rings have appeared in \cite{Lusztig-Fin-Rings,Alex_Unramified_reps,EDLs}.
In these generalisations, the role of the connected reductive groups
is taken over by certain connected (non-reductive) algebraic groups
associated to reductive group schemes over Artinian local rings, via
the Greenberg functor. In this paper we develop some of the structure
theory of these algebraic groups. These results allow for a smoother
treatment of parts of the construction in \cite{Alex_Unramified_reps},
and are necessary (but not sufficient) for a generalisation of the
construction in \cite{EDLs} beyond general and special linear groups.
The algebraic groups we consider are extensions of reductive groups
by connected unipotent groups, and as such are generally not reductive.
Nevertheless, we show that they possess subgroups with properties
closely analogous to subgroups in reductive algebraic groups. 

Let $\text{Set}$ denote the category of sets, and $\text{CRing}$
the category of commutative associative unital rings. Throughout this
paper, a ring will always refer to an object in $\text{CRing}$. As
usual, we will speak of a scheme $\mathbf{X}$ over a ring $R$ rather
than over $\Spec R$, and we write $\mathbf{X}(R)$ for the points
of $\mathbf{X}$ in $\Spec R$. Let $A$ be an Artinian local ring
with algebraically closed residue field $k$. Let $\mathbf{X}$ be
a scheme locally of finite type over $A$. Greenberg \cite{greenberg1}
has defined a functor $\mathcal{F}:\mathbf{X}\mapsto\mathcal{F}\mathbf{X}$
from the category of schemes locally of finite type over $A$ to the
category of schemes locally of finite type over $k$, with the property
that there is a canonical bijection
\[
\mathbf{X}(A)\longiso(\mathcal{F}\mathbf{X})(k).
\]
In Section~\ref{sec:Greenberg functor} we view schemes in terms
of their functors of points, and define the Greenberg functor more
generally for any functor $\text{CRing}\rightarrow\text{Set}$. The
functor $\mathcal{F}$ enjoys a number of properties, proved in \cite{greenberg1,greenberg2}:
If $\mathbf{X}$ is a group scheme over $A$, then $\mathcal{F}\mathbf{X}$
is naturally a group scheme over $k$, and the above bijection is
a group isomorphism. If $\mathbf{X}$ is affine or smooth over $A$,
then the same is true for $\mathcal{F}\mathbf{X}$ over $k$, respectively.
If $\mathbf{X}$ is smooth over $A$ and $\mathbf{X}\times k$ is
irreducible, then $\mathcal{F}\mathbf{X}$ is irreducible (see \cite{greenberg2},
p.~264, Corollary~2; note that since $k$ is algebraically closed
a smooth scheme over $k$ is automatically reduced). Furthermore,
$\mathcal{F}$ preserves open and closed subschemes, respectively.

Let $\mathbf{G}$ be an affine smooth group scheme over $A$. Then
it is in particular of finite type over $A$. Define the group
\[
G=(\mathcal{F}\mathbf{G})(k).
\]
By Greenberg's results mentioned above, $G$ is the $k$-points of
an affine smooth group scheme over $k$, that is, $G$ is a linear
algebraic group over $k$. In general, we write group schemes over
$A$ in boldface type, and the corresponding algebraic group over
$k$ associated to the group scheme via the Greenberg functor as above,
using the same letter in normal type. The group $G$ is connected
if its fibre $\mathbf{G}\times k$ is. 

Suppose that $\mathbf{G}$ is a reductive group scheme over $A$,
that is, an affine smooth group scheme over $A$, such that its fibre
$\mathbf{G}\times k$ is a connected reductive group over $k$ in
the classical sense. Let $\mathbf{H}$ be a subscheme of $\mathbf{G}$.
One can define the normaliser group functor $\N_{\mathbf{G}}(\mathbf{H})$
(see Section~\ref{sec:Transp-norm}) which, as we will see, is often
representable by a closed subscheme of $\mathbf{G}$. Let $\mathbf{T}$
be a maximal torus of $\mathbf{G}$ (see \cite{SGA3}, XII~1.3 and
XV~6.1). Then $\mathbf{T}$ is affine smooth over $A$, and its fibre
$\mathbf{T}\times k$ is a maximal torus of $\mathbf{G}\times k$
in the classical sense. Recall that a Cartan subgroup of a linear
algebraic group over $k$ is defined as the centraliser of a maximal
torus (see \cite{borel}, 11.13). When $A$ is not a field, the group
$G$ is no longer reductive, and the subgroups of the form $T$ are
not maximal tori. We will however prove the following:\begin{thm1}Let
$\mathbf{G}$ be a reductive group scheme over $A$, and let $\mathbf{T}$
be a maximal torus in $\mathbf{G}$. Then $T$ is a Cartan subgroup
of $G$, and the map $\mathbf{T}\mapsto T$ is a bijection between
the set of maximal tori in $\mathbf{G}$ and the set of Cartan subgroups
of $G$.\end{thm1}In particular, it follows from this result that
the groups $T$ are all conjugate in $G$. The groups $G$ thus form
a large family of connected linear algebraic groups with non-trivial
maximal tori and abelian Cartan subgroups which are generally not
maximal tori. 

The proof of the above theorem (and other results of this paper) is
based on the following observation. Recall that Hilbert's Nullstellensatz
implies that if $\mathbf{X}$ is an affine variety over an algebraically
closed field $k$, and $\mathbf{Y}$ and $\mathbf{Z}$ are two closed
reduced subvarieties of $\mathbf{X}$, then $\mathbf{Y}=\mathbf{Y}'$
(i.e., $\mathbf{Y}$ and $\mathbf{Y}'$ are isomorphic as subvarieties
of $\mathbf{X}$) if and only if $\mathbf{Y}(k)=\mathbf{Y}'(k)$,
as subsets of $\mathbf{X}(k)$. In certain situations, this result
can be ``lifted'' to schemes over $A$. More precisely, in Proposition~\ref{pro:Nullstellensatz}
we show that if $\mathbf{X}$ is an affine scheme of finite type over
$A$, and $\mathbf{Y}$ and $\mathbf{Y}'$ are closed smooth subschemes
of $\mathbf{X}$, then $\mathbf{Y}=\mathbf{Y}'$ if and only if $\mathbf{Y}(A)=\mathbf{Y}'(A)$.
As a consequence of this we prove that the Greenberg functor is, in
a certain sense, compatible with the formation of certain normaliser
group schemes, or more generally, transporters, over $A$ and $k$,
respectively (see Proposition~\ref{pro:F and Tran} for the precise
statement). An important special case of this is\begin{cor1}Let $\mathbf{G}$
be an affine group scheme of finite type over $A$, and let $\mathbf{H}$
be a closed smooth subgroup scheme. Assume that $\N_{\mathbf{G}}(\mathbf{H})$
is representable by a closed smooth subscheme of $\mathbf{G}$. Then
\[
\mathcal{F}\N_{\mathbf{G}}(\mathbf{H})(k)=\N_{\mathcal{F}\mathbf{G}}(\mathcal{F}\mathbf{H})(k).
\]
\end{cor1} It is this result, together with the fact that the Greenberg
functor preserves connected components of smooth group schemes over
$A$, which is the key to our proof of Theorem~\ref{thm:Cartan}.

Another type of group which plays an important role in the structure
theory of reductive groups are parabolic subgroups and Borel subgroups.
Given a Borel subgroup $\mathbf{B}$ of $\mathbf{G}$, the corresponding
subgroup $B$ of $G$ is not in general a Borel subgroup. However,
the constructions in \cite{Lusztig-Fin-Rings,Alex_Unramified_reps,EDLs}
show that the groups $B$ play the role of Borel subgroups in the
generalised Deligne-Lusztig theory. In \cite{EDLs}, groups of the
form $B$ are called \emph{strict Borel subgroups}. To have a useful
analogy between strict Borels and Borel subgroups of reductive groups,
it is important to establish that strict Borels are self-normalising
in $G$, and that they form a single orbit under conjugation in $G$.
In Proposition~\ref{pro:N(P)-B} we prove these facts using Proposition~\ref{pro:F and Tran}
together with some results from SGA~3 on smoothness of transporters.

\section{\label{sec:Greenberg functor}Functors of points and the Greenberg
functor}

We will follow the common practice of ignoring set-theoretical complications
in our use of categories. The appropriate modifications can be achieved
for example by using universes, as in \cite{Demazure-Gabriel} (see
also the English translation of its first two chapters \cite{Demazure-Gabriel-English}).
When dealing with group schemes, it is convenient to take the ``functor
of points'' point of view. We therefore begin this section by introducing
the relevant functor categories. Further details can be found in \cite{Demazure-Gabriel-English}
or \cite{Jantzen}, I~1-2.

From now on, $R$ will denote an arbitrary ring, except when specified
otherwise. Throughout this paper, $A$ will denote an Artinian local
ring with perfect residue field $k$. Let $R\text{-Alg}$ be the category
of $R$-algebras, and let $\text{Fun}/R$ denote the category of (covariant)
functors
\[
R\text{-Alg}\longrightarrow\text{Set}.
\]
Objects in $\text{Fun}/R$ are called $R$-\emph{functors} or \emph{functors
over $R$.} The category of affine schemes over $R$ (i.e., over $\Spec R$)
is then identified with the full subcategory of $\text{Fun}/R$ consisting
of \emph{representable} $R$-functors
\[
h_{S}:T\longmapsto\Hom_{R\text{-Alg}}(S,T),
\]
where $S$ is an $R$-algebra. Let $\text{Sch}/R$ denote the category
of schemes over $R$. Then $\text{Sch}/R$ embeds as a full subcategory
of $\text{Fun}/R$ via the functor $\mathbf{X}\mapsto h_{\mathbf{X}}$,
where $h_{\mathbf{X}}$ is given by 
\[
h_{\mathbf{X}}(S)=\Hom_{\text{Sch}/R}(\Spec S,\mathbf{X}),
\]
for any $R$-algebra $S$ (cf.~\cite{geoschemes}, Proposition VI-2).
In a similar way, any locally ringed space $\mathbf{X}$ over $\Spec R$
gives rise to an $R$-functor $h_{\mathbf{X}}$. When $\mathbf{X}$
is an affine scheme over $R$, we will write $R[\mathbf{X}]$ for
the $R$-algebra which represents $\mathbf{X}$.

We will now define the Greenberg functor. The main reference for this
are the original papers \cite{greenberg1,greenberg2}. A summary (in
the context of local principal ideal rings) can be found in \cite{Neron_models},
p.~276. Being a local ring, the characteristic of $A$ is either
equal to $p^{m+1}$, for some prime $p$ and some natural number $m$,
or it is equal to $0$, in which case we set $m=0$. For any integer
$n\geq0$, let $W_{n}:\mathbb{Z}\text{-Alg}\rightarrow\text{CRing}$
be the functor of ``$p$-typical'' truncated Witt vectors of length
$n$ (here $p$ is the prime given by the characteristic of $A$).
It is well-known that the functor $W_{n}$ is representable in $\text{Fun}/\mathbb{Z}$,
and we thus view it as an affine ring scheme over $\mathbb{Z}$. By
\cite{greenberg1}, 1, the ring $A$ has a canonical structure of
$W_{m}(k)$-algebra. In particular, in the equal characteristic case,
$\chara A$ is either $p$ or $0$, so $m=0$ and the ring $A$ is
a $k$-algebra. Furthermore, we can associate to $A$ an affine local
ring scheme $\mathbf{A}$ over $k$, such that for any perfect $k$-algebra
$R$, we have
\[
\mathbf{A}(R)=A\otimes_{W_{m}(k)}W_{m}(R),
\]
see \cite{Nicaise-Sebag}, 2.2.
\begin{defn}
The \emph{Greenberg functor }associated to $A$ is the functor
\begin{eqnarray*}
 & \mathcal{F}:\text{Fun}/A\longrightarrow\text{Fun}/k\\
 & \mathbf{X}\longmapsto\mathcal{F}\mathbf{X},
\end{eqnarray*}
where $\mathcal{F}\mathbf{X}$ is defined by
\[
(\mathcal{F}\mathbf{X})(R)=\mathbf{X}(\mathbf{A}(R)),
\]
for each $k$-algebra $R$.
\end{defn}
We will usually write $\mathcal{F}\mathbf{X}(R)$ instead of $(\mathcal{F}\mathbf{X})(R)$,
since there should be no confusion. We now show that the functor $\mathcal{F}$
associated to $A$ indeed coincides with the functor $F_{\mathbf{A}}$,
which was defined by Greenberg \cite{greenberg1} for schemes of finite
type over $A$. To this end, we recall Greenberg's functor $G_{\mathbf{A}}$
(cf.~\cite{greenberg1}, p.~634). Since Greenberg's original construction
is formulated in terms of schemes as locally ringed spaces, we will
for the moment turn to this point of view. Once the comparison between
our functor $\mathcal{F}$ and Greenberg's $F_{\mathbf{A}}$ is established,
we carry on using the functor of points approach. Let $\mathbf{Y}$
be a scheme over $k$, viewed as a locally ringed space with base
space $|\mathbf{Y}|$. Then $G_{\mathbf{A}}\mathbf{Y}$ is defined
to be the locally ringed space $(|\mathbf{Y}|,\mathcal{O})$, where
for any open subset $U\subseteq|\mathbf{Y}|$, the sheaf $\mathcal{O}$
is given by 
\[
\mathcal{O}(U)=\Hom_{k}(U,\mathbf{A})
\]
(morphisms as locally ringed spaces over $k$). Greenberg calls $\mathcal{O}$
``the sheaf of germs of $k$-morphisms from $\mathbf{Y}$ to $\mathbf{A}$'';
it is a sheaf of rings because $\mathbf{A}$ is a ring object. It
is shown in \cite{greenberg2} that $G_{\mathbf{A}}\mathbf{Y}$ is
a scheme over $A$, and moreover that $G_{\mathbf{A}}$ preserves
affine schemes. Thus, if $R$ is a $k$-algebra, then $G_{\mathbf{A}}(\Spec R)$
is an affine scheme over $A$. For $\mathbf{Y}=\Spec R$, the global
sections of $\mathcal{O}$ are just $\mathcal{O}(\Spec R)=\mathbf{A}(R)$,
so we have 
\[
G_{\mathbf{A}}(\Spec R)=\Spec\mathbf{A}(R).
\]
Suppose that $\mathbf{X}$ is a scheme over $A$ in the sense of locally
ringed spaces. From the definition of $\mathcal{F}$ above and the
Yoneda lemma, we then immediately obtain canonical isomorphisms
\begin{multline*}
(\mathcal{F}h_{\mathbf{X}})(R)=h_{\mathbf{X}}(\mathbf{A}(R))\\
\cong\Hom_{\text{Fun}/A}(h_{\mathbf{A}(R)},h_{\mathbf{X}})\cong\Hom_{\text{Fun}/A}(h_{\Spec\mathbf{A}(R)},h_{\mathbf{X}})\\
\cong\Hom_{\text{Sch}/A}(\Spec\mathbf{A}(R),\mathbf{X})\cong\mathbf{X}(\Spec\mathbf{A}(R))\\
=\mathbf{X}(G_{\mathbf{A}}(\Spec R)).
\end{multline*}
The isomorphisms are functorial in $R$, and it follows in particular
that whenever $\mathcal{F}h_{\mathbf{X}}$ is representable by a scheme
over $k$, it coincides with Greenberg's ``realization'' $F_{\mathbf{A}}\mathbf{X}$.
The key result is now the following
\begin{prop}
[Greenberg] Let $\mathbf{X}$ be a scheme locally of finite type
over $A$. Then $\mathcal{F}\mathbf{X}$ is representable as a scheme
locally of finite type over $k$.\end{prop}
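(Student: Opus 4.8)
The plan is to prove representability first for affine $\mathbf{X}$, realising $\mathcal{F}h_{\mathbf{X}}$ as a closed subfunctor of a finite power of the ring scheme $\mathbf{A}$, and then to obtain the general case by gluing, the essential point being that Greenberg's functor $G_{\mathbf{A}}$ leaves the underlying topological space unchanged.

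First I would treat the affine case. Suppose $\mathbf{X}=\Spec S$ with $S$ of finite type over $A$; since $A$ is Noetherian we may write $S=A[t_1,\dots,t_n]/(f_1,\dots,f_r)$. Using the computation $\mathcal{F}h_{\mathbf{X}}(R)\cong\mathbf{X}(\Spec\mathbf{A}(R))=\Hom_{A\text{-Alg}}(S,\mathbf{A}(R))$ established above, a point of $\mathcal{F}h_{\mathbf{X}}(R)$ is precisely a tuple $(a_1,\dots,a_n)\in\mathbf{A}(R)^n$ with $f_j(a_1,\dots,a_n)=0$ for all $j$. Because $\mathbf{A}(k)=A\otimes_{W_m(k)}W_m(k)=A$, each coefficient of $f_j$ is a $k$-point of $\mathbf{A}$, and the ring operations of $\mathbf{A}$ are morphisms of $k$-schemes; hence each $f_j$ defines a morphism $\mathbf{A}^n\to\mathbf{A}$ over $k$. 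The functor $\mathcal{F}h_{\mathbf{X}}$ is thus the preimage of the zero section under $(f_1,\dots,f_r)\colon\mathbf{A}^n\to\mathbf{A}^r$, a closed subscheme of $\mathbf{A}^n$. Since $\mathbf{A}$ is affine and of finite type over $k$ (being built from the truncated Witt ring scheme $W_m$ and the finite $W_m(k)$-algebra $A$), this closed subscheme is affine and of finite type over $k$, which settles the affine case.

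It then remains to glue. I would cover $\mathbf{X}$ by affine opens $\mathbf{U}_i$ of finite type over $A$ and assemble the representable functors $\mathcal{F}h_{\mathbf{U}_i}$ into a scheme representing $\mathcal{F}h_{\mathbf{X}}$. Here the key observation is that, by the definition of $G_{\mathbf{A}}$, the scheme $G_{\mathbf{A}}(\Spec R)=\Spec\mathbf{A}(R)$ has the same underlying topological space as $\Spec R$. Consequently, whether a morphism $\Spec\mathbf{A}(R)\to\mathbf{X}$ factors through an open subscheme $\mathbf{U}\subseteq\mathbf{X}$ depends only on the underlying space of $\Spec R$ and is an open condition there; this shows that $\mathcal{F}h_{\mathbf{U}}\to\mathcal{F}h_{\mathbf{X}}$ is an open immersion. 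The same identification of topological spaces shows that a Zariski covering of $\Spec R$ induces one of $\Spec\mathbf{A}(R)$, so that $\mathcal{F}h_{\mathbf{X}}$ is a Zariski sheaf. A functor that is a Zariski sheaf and is covered by representable open subfunctors is representable by a scheme, and since each $\mathcal{F}h_{\mathbf{U}_i}$ is of finite type over $k$, the resulting scheme $\mathcal{F}\mathbf{X}$ is locally of finite type over $k$.

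The main obstacle is the gluing step rather than the affine case: one must verify carefully that $\mathcal{F}$ preserves open immersions and that $\mathcal{F}h_{\mathbf{X}}$ satisfies Zariski descent. Both reduce to the single structural fact that $G_{\mathbf{A}}$ preserves underlying topological spaces, so the crux is to use this fact correctly and to invoke the standard criterion for representability of a Zariski sheaf admitting an open cover by representable subfunctors.
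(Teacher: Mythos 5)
The paper does not actually write out a proof of this statement: its ``proof'' is a two-line citation, observing that the argument of Proposition~7 in \cite{greenberg1}, Section~4, applies verbatim to schemes locally of finite type (not merely of finite type) over $A$, and, combined with Corollary~1 there, gives the result. So your proposal is to be measured against Greenberg's original argument rather than against anything written in the paper --- and it is, in essence, a faithful reconstruction of that argument. Your affine step is correct and complete: $A$ has finite length over $W_m(k)$, so $\mathbf{A}$ is an affine ring scheme of finite type over $k$ (Greenberg shows it is even isomorphic, as a scheme, to an affine space, as the paper recalls); the identification $\mathcal{F}h_{\mathbf{X}}(R)\cong\Hom_{A\text{-Alg}}(S,\mathbf{A}(R))$ is exactly the Yoneda computation displayed in the paper; and cutting out the common zero locus of the morphisms $f_j\colon\mathbf{A}^n\to\mathbf{A}$ (with constants entering through $\mathbf{A}(k)=A$) realises $\mathcal{F}h_{\mathbf{X}}$ as a closed subscheme of $\mathbf{A}^n$, which is precisely how Greenberg handles the affine case. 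The globalisation via open subfunctors and the standard representability criterion for Zariski sheaves covered by representable open subfunctors is likewise the route taken in \cite{greenberg1}.

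One point in your gluing step needs sharpening. You state your ``key structural fact'' purely topologically --- $G_{\mathbf{A}}$ preserves underlying spaces --- and that does suffice for the open-immersion step, since factoring a morphism through an open subscheme is a condition on underlying spaces alone. But it does not by itself give the Zariski-sheaf property: to glue compatible morphisms $\Spec\mathbf{A}(R_{f_i})\to\mathbf{X}$ you need each $\Spec\mathbf{A}(R_{f_i})$ to be an \emph{open subscheme} of $\Spec\mathbf{A}(R)$, i.e.\ you need the structure sheaves to match, which amounts to $\mathbf{A}(R_f)$ being the localisation of $\mathbf{A}(R)$ at a (Teichm\"uller-type) lift of $f$; a homeomorphism of spectra does not provide this. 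The fix is immediate from the definition the paper recalls: $G_{\mathbf{A}}\mathbf{Y}=(|\mathbf{Y}|,\mathcal{O})$ with $\mathcal{O}(U)=\Hom_k(U,\mathbf{A})$, so $G_{\mathbf{A}}$ applied to an open subscheme is literally the restriction of the locally ringed space, and the needed sheaf compatibility is built in. With the crux restated at the level of locally ringed spaces rather than topological spaces, your proof is complete and coincides with the argument the paper cites.
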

\begin{proof}
This is essentially proved in \cite{greenberg1}, 4. Note that the
proof of Proposition~7 holds for any scheme locally of finite type
over $A$, and together with Corollary 1, indeed implies that $\mathcal{F}\mathbf{X}$
is a scheme locally of finite type over $k$.
\end{proof}
A nice consequence of the definition of $\mathcal{F}$ in terms of
functors of points is that it trivially preserves fibre products.
More precisely, let $\mathbf{X}$, $\mathbf{Y}$, and $\mathbf{Z}$
be objects in $\text{Fun}/A$. Then for any $k$-algebra $R$, we
have
\begin{multline*}
\mathcal{F}(\mathbf{X}\times_{\mathbf{Z}}\mathbf{Y})(R)=(\mathbf{X}\times_{\mathbf{Z}}\mathbf{Y})(\mathbf{A}(R))\\
=\mathbf{X}(\mathbf{A}(R))\times_{\mathbf{Z}(\mathbf{A}(R))}\mathbf{Y}(\mathbf{A}(R))=\mathcal{F}\mathbf{X}(R)\times_{\mathcal{F}\mathbf{Z}(R)}\mathcal{F}\mathbf{Y}(R)\\
=(\mathcal{F}\mathbf{X}\times_{\mathcal{F}\mathbf{Z}}\mathcal{F}\mathbf{Y})(R),
\end{multline*}
and hence 
\[
\mathcal{F}(\mathbf{X}\times_{\mathbf{Z}}\mathbf{Y})=\mathcal{F}\mathbf{X}\times_{\mathcal{F}\mathbf{Z}}\mathcal{F}\mathbf{Y}.
\]
It follows from this (see Section~\ref{sec:Transp-norm}) that $\mathcal{F}$
sends group objects in $\text{Fun}/A$ to group objects in $\text{Fun}/k$.
It also trivially preserves subfunctors. 

Following \cite{SGA3}, XI~1.1, we call an $R$-functor $\mathbf{X}$
\emph{formally smooth }(resp.~\emph{formally unramified}, resp.~\emph{formally
\'etale}) if for every $R$-algebra $S$, and every nilpotent ideal
$J$ in $S$, the induced map
\[
\mathbf{X}(S)\longrightarrow\mathbf{X}(S/J)
\]
is surjective (resp.~injective, resp.~bijective). Moreover $\mathbf{X}$
is called \emph{smooth over $R$ }(resp.\emph{~unramified over $R$},
resp.~\emph{\'etale over $R$}) if it satisfies the above condition,
and in addition is locally of finite presentation over $R$, that
is, if it commutes with filtered colimits. When $\mathbf{X}$ is representable
by a scheme, it commutes with filtered colimits if and only if it
is locally of finite presentation in the usual sense (cf.~\cite{EGA-IV},
III~8.14.2~c), note the contravariant statement that filtered colimits
are turned into filtered limits when working with affine schemes rather
than rings). In $ $\cite{EGA-IV}, IV~17.3.1, a map of schemes $\mathbf{X}\rightarrow\mathbf{Y}$
is defined to be \emph{smooth} if it is locally of finite presentation
and formally smooth. Another definition of smoothness is given in
\cite{EGA-IV}, II~6.8.1, but the two definitions are shown to be
equivalent in \cite{EGA-IV}, IV~17.5.2. This is sometimes referred
to as Grothendieck's infinitesimal criterion for smoothness, or the
infinitesimal lifting property. Note that if $R$ is Noetherian, ``(locally
of) finite presentation'' is equivalent to ``(locally of) finite
type''.

Greenberg has shown that if $\mathbf{X}$ is a smooth scheme over
$A$, then $\mathcal{F}\mathbf{X}$ is smooth over $k$ (see \cite{greenberg2},
p.~263, Corollary~1). The following is a generalisation of this
result to the functor of points setting. 
\begin{lem}
Let $\mathbf{X}$ be an $A$-functor. If $\mathbf{X}$ is smooth (resp.~unramified,
resp.~\'etale) over $A$, then $\mathcal{F}\mathbf{X}$ is smooth
(resp.~unramified, resp.~\'etale) over $k$.\end{lem}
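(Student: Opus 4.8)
The plan is to check the two defining conditions of smoothness over $k$ separately, namely the infinitesimal lifting property (formal smoothness) and local finite presentation, and both will follow once we understand the ring functor $\mathbf{A}$. The key technical input is the claim that for a $k$-algebra $R$ and a nilpotent ideal $J\subseteq R$, the induced $A$-algebra homomorphism $\mathbf{A}(R)\to\mathbf{A}(R/J)$ is surjective with nilpotent kernel. Granting this, formal smoothness transfers essentially formally: by definition $(\mathcal{F}\mathbf{X})(R)=\mathbf{X}(\mathbf{A}(R))$, so the reduction map $(\mathcal{F}\mathbf{X})(R)\to(\mathcal{F}\mathbf{X})(R/J)$ is precisely the map $\mathbf{X}(\mathbf{A}(R))\to\mathbf{X}(\mathbf{A}(R/J))$ attached to a surjection of $A$-algebras with nilpotent kernel. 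Since $\mathbf{X}$ is formally smooth (resp.\ unramified, resp.\ \'etale) over $A$, this map is surjective (resp.\ injective, resp.\ bijective), which is exactly the condition that $\mathcal{F}\mathbf{X}$ be formally smooth (resp.\ unramified, resp.\ \'etale) over $k$.

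The heart of the argument, and the main obstacle, is the claim itself. Writing $\mathbf{A}(R)=A\otimes_{W_{m}(k)}W_{m}(R)$ and using that $A\otimes_{W_{m}(k)}-$ is right exact, it suffices to show that $W_{m}(R)\to W_{m}(R/J)$ is surjective with nilpotent kernel: surjectivity then passes to the pushout $\mathbf{A}(R)\to\mathbf{A}(R/J)$, and the image of a nilpotent kernel generates a nilpotent ideal. In Witt coordinates the map $W_{m}(R)\to W_{m}(R/J)$ is componentwise reduction modulo $J$, so it is surjective and its kernel $K$ is the set of Witt vectors all of whose components lie in $J$. To see that $K$ is nilpotent, I would use that the multiplication polynomials $M_{n}$ of the Witt ring satisfy $M_{n}(X;0)=M_{n}(0;Y)=0$, since the zero Witt vector is the ring zero; hence every monomial of $M_{n}$ involves at least one $X$-variable and at least one $Y$-variable. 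Consequently, if two Witt vectors have all components in $J^{a}$ and $J^{b}$ respectively, every component of their product lies in $J^{a+b}$, and by induction a product of $N$ elements of $K$ has all components in $J^{N}$. As $J$ is nilpotent, this forces $K^{N}=0$ for $N$ large, proving the claim.

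Finally, to see that $\mathcal{F}\mathbf{X}$ is locally of finite presentation over $k$, that is, commutes with filtered colimits, I would observe that $\mathbf{A}$ itself commutes with filtered colimits: the truncated Witt vector functor $W_{m}$ is represented by a finitely presented $\mathbb{Z}$-algebra and hence commutes with filtered colimits, while $A\otimes_{W_{m}(k)}-$ is a left adjoint and commutes with all colimits. Since $\mathbf{X}$ commutes with filtered colimits by hypothesis, so does the composite $\mathcal{F}\mathbf{X}=\mathbf{X}\circ\mathbf{A}$. Combining this with the formal smoothness, formal unramifiedness, or formal \'etaleness established above yields that $\mathcal{F}\mathbf{X}$ is smooth (resp.\ unramified, resp.\ \'etale) over $k$, as required. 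Everything apart from the nilpotence of $K$ is a formal consequence of the definitions and the right-exactness of base change.
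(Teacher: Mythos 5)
Your proof is correct, and it shares the paper's skeleton: both arguments reduce the lemma to the single claim that for a nilpotent ideal $J\subseteq S$ the induced map $\mathbf{A}(S)\to\mathbf{A}(S/J)$ is a surjection of $A$-algebras with nilpotent kernel, transfer formal smoothness, unramifiedness and \'etaleness through $(\mathcal{F}\mathbf{X})(S)=\mathbf{X}(\mathbf{A}(S))$ exactly as you do, and obtain local finite presentation from the fact that $\mathbf{A}$ commutes with filtered colimits. The genuine difference lies in how the technical inputs are justified. The paper quotes Greenberg's structure result that $\mathbf{A}$ is, as a scheme, an affine space over $k$ --- which yields smoothness (hence surjectivity on nilpotent thickenings) and finite type in one stroke --- and then identifies the kernel $J_{\mathbf{A}(S)}$ with the set of non-unital homomorphisms $k[\mathbf{A}]\to J$, asserting nilpotence from this identification. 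You instead unwind $\mathbf{A}=A\otimes_{W_{m}(k)}W_{m}(-)$ and prove everything by hand: the map $W_{m}(S)\to W_{m}(S/J)$ is componentwise reduction, hence surjective; its kernel $K$ is nilpotent because $M_{n}(X;0)=M_{n}(0;Y)=0$ (a polynomial identity, by evaluating on a generic Witt vector) forces every monomial of the multiplication polynomials to involve both $X$- and $Y$-variables, so a product of $N$ elements of $K$ has all components in $J^{N}$; and right-exactness of $A\otimes_{W_{m}(k)}-$ carries this to $\mathbf{A}(S)$, where the kernel is the extended ideal and $(K\mathbf{A}(S))^{N}=K^{N}\mathbf{A}(S)=0$ by commutativity, which is exactly what your parenthetical remark about extensions of nilpotent ideals needs. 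Likewise, your filtered-colimit argument (the representing algebra of $W_{m}$ is a finitely presented polynomial ring, and base change is a left adjoint) replaces the paper's appeal to $\mathbf{A}$ being an affine space of finite type. What your route buys is self-containedness: your $M_{n}$-computation is essentially the justification that the paper's terse ``so $J_{\mathbf{A}(S)}$ is nilpotent'' leaves implicit. What the paper's route buys is brevity, and it simultaneously records the smoothness of $\mathbf{A}$ over $k$, which is the form of the statement it reuses.
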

\begin{proof}
The scheme $\mathbf{A}$ is isomorphic to an affine space over $k$
(see \cite{greenberg1}, 4). It is thus smooth over $k$. If $\mathbf{X}$
is locally of finite presentation (resp.~formally smooth) over $A$,
it therefore immediately follows that $\mathcal{F}\mathbf{X}=\mathbf{X}(\mathbf{A}(-))$
is locally of finite presentation (resp.~formally smooth) over $k$.
Moreover, let $S$ be a $k$-algebra, and $J$ a nilpotent ideal in
$S$. As we have just seen, the map $S\rightarrow S/J$ induces a
surjective map of rings $\mathbf{A}(S)\rightarrow\mathbf{A}(S/J)$.
Let $J_{\mathbf{A}(S)}$ be the kernel of the latter. Then, since
$\mathbf{A}$ is represented by the $k$-algebra $k[\mathbf{A}]$,
we have $J_{\mathbf{A}(S)}=\Hom_{R}(k[\mathbf{A}],J)$ (homomorphisms
of not-necessarily unital $R$-algebras), and so $J_{\mathbf{A}(S)}$
is nilpotent. If $\mathbf{X}$ is unramified (resp.~\'etale) over
$A$, then the morphism
\begin{align*}
(\mathcal{F}\mathbf{X})(S) & =\mathbf{X}(\mathbf{A}(S))\longrightarrow\mathbf{X}(\mathbf{A}(S)/J_{\mathbf{A}(S)})\\
 & \cong\mathbf{X}(\mathbf{A}(S/J))=(\mathcal{F}\mathbf{X})(S/J),
\end{align*}
is injective (resp.~bijective), so $\mathcal{F}\mathbf{X}$ is unramified
over $k$ (resp.~\'etale over $k$).
\end{proof}
Suppose that $\mathbf{G}$ is a group scheme over $R$, and let $S$
be an $R$-algebra. For any point $s\in\Spec S$, let $k(s)$ denote
the residue field at $s$, that is, the fraction field of $S/s$.
Then $k(s)$ is naturally an $R$-algebra, and we write $\mathbf{G}_{s}:=\mathbf{G}_{k(s)}$.
The \emph{connected component $\mathbf{G}^{\circ}$ of $\mathbf{G}$}
(cf.~\cite{SGA3}, VI$_{\textrm{A}}$~2, VI$_{\textrm{B}}$~3.1)
is the subgroup scheme of $\mathbf{G}$ whose $S$-points are given
by
\[
\mathbf{G}^{\circ}(S)=\{g\in\mathbf{G}(S)\mid g_{s}\in\mathbf{G}_{s}^{\circ}(k(s)),\text{ for all }s\in\Spec S\},
\]
where $g_{s}$ is the image of $g$ in $\mathbf{G}_{s}(k(s))\cong\mathbf{G}(k(s))$.
When $\mathbf{G}$ is smooth over $R$, the same is true for the connected
component $\mathbf{G}^{\circ}$ (cf.~\cite{SGA3}, VI$_{\textrm{B}}$~3.4,
3.10). We will be particularly interested in the case where $R$ is
an Artinian local base $A$ with residue field $k$. In this case
$\Spec A$ has a unique point $\mathfrak{m}$, and the $A$-points
of the connected component is simply given by
\[
\mathbf{G}^{\circ}(A)=\{g\in\mathbf{G}(A)\mid g_{\mathfrak{m}}\in\mathbf{G}_{k}^{\circ}(k)\}.
\]
Later on we will show that the Greenberg functor preserves connected
components of smooth group schemes.

\section{\label{sec:Transp-norm}Group scheme actions and transporters}

For any $R$-functor $\mathbf{X}$ and $R$-algebra $R'$, we write
$\mathbf{X}_{R'}$ for the base extension $\mathbf{X}\times_{R}R'$.
Given a map $S\rightarrow S'$ between two $R$-algebras and an element
$x\in\mathbf{X}(S)$, we denote by $x_{S'}$ the image of $x$ under
the induced map $\mathbf{X}(S)\rightarrow\mathbf{X}(S')$.

Let $\mathbf{G}$ be a group functor over $R$, that is, a group object
in the category $\text{Fun}/R$. This means that for any $R$-algebra
$S$, the set $\mathbf{G}(S)$ carries a group structure, or equivalently,
that there exist morphisms
\[
m:\mathbf{G}\times\mathbf{G}\longrightarrow\mathbf{G},\quad i:\mathbf{G}\longrightarrow\mathbf{G},\quad e:\mathbf{1}\longrightarrow\mathbf{G},
\]
satisfying the usual properties (here $\mathbf{1}$ denotes the terminal
object in $\text{Fun}/R$ which sends any $R$-algebra to the one-point
set). As we have noted earlier, the Greenberg functor $\mathcal{F}$
preserves fibre products. It also obviously sends the terminal object
in $\text{Fun}/A$ to the terminal object in $\text{Fun}/k$. Thus,
if $\mathbf{G}$ is a group functor over $A$ with maps $m,i,e$,
then $\mathcal{F}\mathbf{G}$ is a group functor over $k$ with maps
$\mathcal{F}(m),\mathcal{F}(i),\mathcal{F}(e)$. 

An \emph{action} of $\mathbf{G}$ on an $R$-functor $\mathbf{X}$
is a morphism
\[
\mathbf{G}\times\mathbf{X}\longrightarrow\mathbf{X},
\]
such that for each $R$-algebra $S$, the induced morphism $\mathbf{G}(S)\times\mathbf{X}(S)\rightarrow\mathbf{X}(S)$
defines an action of the group $\mathbf{G}(S)$ on the set $\mathbf{X}(S)$,
in the usual sense. If $\mathbf{G}$ is an $A$-group functor acting
on an $A$-functor $\mathbf{X}$, then the induced morphism
\[
\mathcal{F}\mathbf{G}\times\mathcal{F}\mathbf{X}\longrightarrow\mathcal{F}\mathbf{X},
\]
defines an action of the $k$-group functor $\mathcal{F}\mathbf{G}$
on $\mathcal{F}\mathbf{X}$. One of the most important examples of
an action is that of $\mathbf{G}$ acting on itself by conjugation,
that is, the morphism $\gamma_{\mathbf{G}}:\mathbf{G}\times\mathbf{G}\rightarrow\mathbf{G}$
such that for each $R$-algebra $S$, the map $\gamma_{\mathbf{G}}(S):\mathbf{G}(S)\times\mathbf{G}(S)\rightarrow\mathbf{G}(S)$
is given by $(g,g')\mapsto gg'g^{-1}$. Suppose that $\mathbf{G}$
is a group functor with maps $m,i,e$, as above, and let $p_{1}:\mathbf{G}\times\mathbf{G}\rightarrow\mathbf{G}$
be the first projection map. Then the conjugation action of $\mathbf{G}$
on itself is given by the composition of the maps 
\[
\mathbf{G}\times\mathbf{G}\xrightarrow{m\times i\circ p_{1}}\mathbf{G}\times\mathbf{G}\xrightarrow{\, m\,}\mathbf{G}.
\]
It then immediately follows that the Greenberg functor preserves the
conjugation action. More precisely, if $\mathbf{G}$ is a group functor
over $A$, then 
\[
\mathcal{F}(\gamma_{\mathbf{G}})=\gamma_{\mathcal{F}(\mathbf{G})}.
\]
For any $R$-functors $\mathbf{X}$ we consider the \emph{functor
of automorphisms}, written $\underline{\Aut}_{R}(\mathbf{\mathbf{X}})$
or simply $\underline{\Aut}(\mathbf{\mathbf{X}})$, when there is
no confusion about the base ring. This is the $R$-functor defined
by
\[
\underline{\Aut}(\mathbf{\mathbf{X}})(S)=\Aut_{\text{Fun}/S}(\mathbf{X}_{S}),
\]
for any $R$-algebra $S$. An action of $\mathbf{G}$ on $\mathbf{X}$
then gives rise to a morphism $\mathbf{G}\rightarrow\underline{\Aut}(\mathbf{X})$.
For example, the conjugation action of $\mathbf{G}$ on itself gives
rise to the morphism $\mathbf{G}\rightarrow\underline{\Aut}(\mathbf{G})$
such that for any $R$-algebra $S$, the map $\mathbf{G}(S)\rightarrow\Aut(\mathbf{G}_{S})$
is given by $\mathbf{}$$g\mapsto\ad(g)$, where $\ad(g):\mathbf{G}_{S}\rightarrow\mathbf{G}_{S}$
is the morphism defined by
\[
\ad(g)(S'):x\longmapsto g_{S'}xg_{S'}^{-1},\ \text{for }x\in\mathbf{G}(S')\text{ and any map }S\rightarrow S'.
\]
If $\mathbf{G}$ is a group functor over $A$, the Greenberg functor
preserves the conjugation action, and thus
\[
\mathcal{F}(\ad(g))=\ad(g):\mathcal{F}\mathbf{G}\longrightarrow\mathcal{F}\mathbf{G},
\]
for any $g\in\mathbf{G}(A)=\mathcal{F}\mathbf{G}(k)$.

If $f:\mathbf{X}\rightarrow\mathbf{Y}$ is a morphism of $R$-functors,
we write $f(\mathbf{X})$ for the image functor, given by
\[
f(\mathbf{X})(S):=\im(f(S)(\mathbf{X}(S))),
\]
for any $R$-algebra $S$. Suppose that $\mathbf{X}$ is an $R$-functor,
and $\mathbf{Y}$ and $\mathbf{Z}$ are two subfunctors of $\mathbf{X}$
given by inclusions $i:\mathbf{Y}\hookrightarrow\mathbf{X}$ and $j:\mathbf{Z}\hookrightarrow\mathbf{X}$,
respectively. We write $\mathbf{Y}=\mathbf{Z}$ and say that $\mathbf{Y}$
and $\mathbf{Z}$ are \emph{equal as subfunctors of $\mathbf{X}$},
if there exists an isomorphism $c:\mathbf{Y}\rightarrow\mathbf{Z}$
such that $i=j\circ c$.
\begin{defn}
Let $\mathbf{G}$ be an $R$-group functor acting on an $R$-functor
$\mathbf{X}$, and let $\alpha:\mathbf{G}\rightarrow\underline{\Aut}(\mathbf{X})$
be the corresponding morphism. Let $\mathbf{Y}$ and $\mathbf{Z}$
be two subfunctors of $\mathbf{X}$. Let $S$ be an arbitrary $R$-algebra.
Define the \emph{strict transporter $\T_{\mathbf{G}}(\mathbf{Y},\mathbf{Z})$
from $\mathbf{Y}$ to $\mathbf{Z}$ in $\mathbf{X}$}, to be the subfunctor
of $\mathbf{G}$ whose $S$-points are given by
\begin{align*}
\T_{\mathbf{G}}(\mathbf{Y},\mathbf{Z})(S) & =\{g\in\mathbf{G}(S)\mid\alpha(S)(g)(\mathbf{Y}_{S})=\mathbf{Z}_{S}\}\\
 & =\{g\in\mathbf{G}(S)\mid\alpha(S')(g_{S'})(S')(\mathbf{Y}(S'))=\mathbf{Z}(S'),\text{ for any }S\rightarrow S'\}.
\end{align*}
In particular, if $\mathbf{G}$ acts on itself by conjugation, $\mathbf{X}=\mathbf{G}$,
and $\mathbf{Y}=\mathbf{Z}$, we write $\N_{\mathbf{G}}(\mathbf{Y})$
for the strict transporter from $\mathbf{Y}$ to $\mathbf{Y}$ in
$\mathbf{G}$, and call it the \emph{normaliser of $\mathbf{Y}$ in
$\mathbf{G}$}. Its $S$-points are thus given by
\begin{align*}
\N_{\mathbf{G}}(\mathbf{Y})(S) & =\{g\in\mathbf{G}(S)\mid\ad(g)(\mathbf{Y}_{S})=\mathbf{Y}_{S}\}\\
 & =\{g\in\mathbf{G}(S)\mid g_{S'}\mathbf{Y}(S')g_{S'}^{-1}=\mathbf{Y}(S'),\text{ for any }S\rightarrow S'\}.
\end{align*}
\end{defn}
\begin{rem*}
Transporters are defined in \cite{SGA3}, VI$_{\text{B}}$~6.1 in
the case where $\mathbf{G}$ acts on itself by conjugation, and where
$\mathbf{Y}$ and $\mathbf{Z}$ are subfunctors of $\mathbf{G}$.
One may also consider the (not necessarily strict) transporter, whose
$S$-points are defined by an inclusion rather than an equality. The
two types of transporters coincide in the case where $\mathbf{G}$
is a scheme acting on itself by conjugation, $\mathbf{Y}=\mathbf{Z}$
is a subscheme of $\mathbf{G}$, and either $\mathbf{Y}$ is of finite
presentation over $R$, or $\T_{\mathbf{G}}(\mathbf{Y},\mathbf{Y})=\N_{\mathbf{G}}(\mathbf{H})$
is representable$ $ by a scheme of finite presentation over $R$
(cf.~\cite{SGA3}, VI$_{\text{B}}$~6.4). We will only be interested
in normalisers in situations where both of these conditions are satisfied,
so we will not distinguish between the normaliser and the strict normaliser.

One may define centralisers in a similar way, but these will play
no role in this paper.
\end{rem*}
From now on, suppose that $k$ is an algebraically closed field. Let
$\mathbf{X}$ be an affine variety over $k$, that is, a (not necessarily
irreducible) scheme which is affine, reduced, and of finite type over
$k$). Recall that Hilbert's Nullstellensatz implies that a closed
reduced subvariety $\mathbf{Y}$ of $\mathbf{X}$ is determined by
its set of points $\mathbf{Y}(k)\subseteq\mathbf{X}(k)$. More precisely,
if $\mathbf{Y}$ and $\mathbf{Z}$ are two closed reduced subvarieties
of $\mathbf{X}$, defined by the radical ideals $I$ and $J$ of $k[\mathbf{X}]$,
respectively, then $\mathbf{Y}(k)=\mathbf{Z}(k)$ as subsets of $\mathbf{X}(k)$,
implies that $I=J$. The purpose of the following result is to prove
a generalisation of this. 
\begin{prop}
\label{pro:Nullstellensatz}Let $A$ be an Artinian local ring with
algebraically closed residue field $k$. Let $\mathbf{X}$ be an affine
scheme of finite type over $A$, and let $\mathbf{Y}$ and $\mathbf{Y}'$
be closed smooth subschemes of $\mathbf{X}$. Then $\mathbf{Y}=\mathbf{Y}'$
if and only if $\mathbf{Y}(A)=\mathbf{Y}'(A)$. \end{prop}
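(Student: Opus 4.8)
The plan is to prove the non-trivial implication, the converse being immediate since equal subschemes have equal sets of $A$-points. Write $B=A[\mathbf{X}]$ and let $I,I'\subseteq B$ be the ideals defining $\mathbf{Y}$ and $\mathbf{Y}'$, so that the equality $\mathbf{Y}=\mathbf{Y}'$ of closed subschemes of $\mathbf{X}$ means precisely $I=I'$. By symmetry it is enough to show $I\subseteq I'$. Put $C=B/I'=A[\mathbf{Y}']$, a smooth affine $A$-algebra whose special fibre is reduced. For $f\in I$ and any $A$-point $\psi\in\mathbf{Y}'(A)=\Hom_{A\text{-alg}}(C,A)$, the hypothesis $\mathbf{Y}'(A)\subseteq\mathbf{Y}(A)$ forces $\psi$ to kill $I$, hence $\psi(f)=0$. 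Thus everything reduces to the following Nullstellensatz for $A$-points: \emph{if $g\in C$ satisfies $\psi(g)=0$ for every $A$-algebra homomorphism $\psi\colon C\to A$, then $g=0$}; applied to the image of $f$ in $C$, this yields $f\in I'$.

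To prove this I first descend to the residue field. Let $C_{k}=C\otimes_{A}k$ be the coordinate ring of the special fibre $\mathbf{Y}'_{k}$, a reduced $k$-algebra of finite type. The essential point is that every $k$-point $\bar{\psi}\colon C_{k}\to k$ lifts to an $A$-point $\psi\colon C\to A$. I would obtain this from the infinitesimal criterion for smoothness recalled in Section~\ref{sec:Greenberg functor}: beginning with $\bar{\psi}$ over $A/\mathfrak{m}$, one lifts step by step along the surjections $A/\mathfrak{m}^{n+1}\to A/\mathfrak{m}^{n}$, whose kernels are nilpotent, to a compatible family of points $\psi_{n}\colon C\to A/\mathfrak{m}^{n}$, and then assembles these into a genuine $A$-point $\psi$ reducing to $\bar{\psi}$. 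Granting the lift, the hypothesis gives $\psi(g)=0$, so $\bar{\psi}(\bar{g})=0$ for the image $\bar{g}$ of $g$ in $C_{k}$; as $\bar{\psi}$ ranges over all $k$-points and $k$ is algebraically closed, the classical Hilbert Nullstellensatz for the reduced ring $C_{k}$ gives $\bar{g}=0$, that is $g\in\mathfrak{m}C$.

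It remains to bootstrap from $g\in\mathfrak{m}C$ to $g=0$, which I carry out one infinitesimal layer at a time. For each $n$ the quotient $A/\mathfrak{m}^{n}$ is Artinian local, $C\otimes_{A}A/\mathfrak{m}^{n}$ is smooth over it with the same reduced special fibre, and the same lifting shows that every $A/\mathfrak{m}^{n}$-point of $C$ comes from an $A$-point; hence $g$ vanishes at all $A/\mathfrak{m}^{n}$-points. An induction on the length of $A/\mathfrak{m}^{n}$, with base case the field case just treated and inductive step again furnished by lifting across the nilpotent kernels $\mathfrak{m}^{i}/\mathfrak{m}^{i+1}$, shows $g\in\mathfrak{m}^{n}C$ for every $n$. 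Finally, since $C$ is $A$-flat and $A$ is $\mathfrak{m}$-adically separated by Krull's intersection theorem, $\bigcap_{n}\mathfrak{m}^{n}C=0$, and therefore $g=0$. This establishes the Nullstellensatz for $A$-points and hence the proposition.

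I expect the main obstacle to be exactly the lifting step: passing from the compatible system of points over the Artinian quotients $A/\mathfrak{m}^{n}$ to an honest $A$-point of the smooth scheme $\mathbf{Y}'$. This is the only place where smoothness (rather than mere flatness) and the local structure of $A$ are genuinely used; once it is in hand, the reduction to the classical Nullstellensatz over $k$ and the $\mathfrak{m}$-adic bootstrap are formal.
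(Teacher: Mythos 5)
Your reduction to a pointwise statement --- a regular function on $\mathbf{Y}'$ vanishing at every $A$-point must be zero --- is a genuinely different route from the paper's, which never argues point by point but instead works with the correspondences $\mathcal{I}$, $\mathcal{V}$ on the ambient algebra $A[\mathbf{X}]$, reduces modulo $\mathfrak{m}$ to the classical Nullstellensatz, and finishes with Nakayama. Your layerwise induction is also sound as far as it goes: using flatness, $\mathfrak{m}^{n-1}C/\mathfrak{m}^{n}C\cong(\mathfrak{m}^{n-1}/\mathfrak{m}^{n})\otimes_{k}C_{k}$, and lifting $k$-points across the nilpotent kernels of $A/\mathfrak{m}^{n}\to A/\mathfrak{m}$ (legitimate, since these quotients are Artinian) does give $g\in\mathfrak{m}^{n}C$ from $g\in\mathfrak{m}^{n-1}C$. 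But in the stated generality of a Noetherian local $A$ there is a genuine gap at exactly the step you flag: a compatible family of homomorphisms $\psi_{n}\colon C\to A/\mathfrak{m}^{n}$ assembles only into a homomorphism $C\to\varprojlim_{n}A/\mathfrak{m}^{n}=\widehat{A}$, and unless $A$ is complete (or henselian) there is no way back to an $A$-point. Concretely, take $A=k[t]_{(t)}$ with $\operatorname{char}k\neq2$ and $C=A[y]/(y^{2}-(1+t))$, which is \'etale over $A$ with reduced fibre $\{\pm1\}$: the tower of liftings exists at every finite level, yet $\operatorname{Hom}_{A\text{-alg}}(C,A)=\emptyset$ because $1+t$ is not a square in $k(t)$.

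There is a second failure at the end: $\bigcap_{n}\mathfrak{m}^{n}C=0$ does \emph{not} follow from flatness of $C$ together with Krull intersection in $A$, because $\mathfrak{m}C$ need not lie in the Jacobson radical of $C$. Take $t\in\mathfrak{m}$ non-nilpotent and $C=A[y]/(ty-1)\cong A[1/t]$, a closed subscheme of $\mathbb{A}^{1}_{A}$, \'etale over $A$ with empty (hence reduced) special fibre; here $\mathfrak{m}C=C$, so $\bigcap_{n}\mathfrak{m}^{n}C=C\neq0$. In fact this example shows the proposition as stated is false for every non-Artinian Noetherian local $A$: the subschemes $\mathbf{Y}=V(ty-1)$ and $\mathbf{Y}'=V(t^{2}y-1)$ of $\mathbb{A}^{1}_{A}$ are closed, smooth, with reduced fibres, satisfy $\mathbf{Y}(A)=\mathbf{Y}'(A)=\emptyset$, and are distinct. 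So neither of your two gaps can be repaired in that generality --- and, to be fair, the paper's own proof silently makes the same Artinian assumptions (surjectivity of $\mathbf{Y}(A)\to\mathbf{Y}(k)$ from smoothness alone needs $\mathfrak{m}$ nilpotent or $A$ henselian, and its Nakayama step $M=\mathfrak{m}M\Rightarrow M=0$ fails for the module $M=A[1/t]$ above). When $A$ is Artinian --- the standing hypothesis everywhere else in the paper, and the only case in which the proposition is applied --- your argument closes up completely and correctly: $\mathfrak{m}^{r}=0$, so a single application of the infinitesimal criterion lifts $k$-points to $A$-points, and the bootstrap terminates with $g\in\mathfrak{m}^{r}C=0$, with no appeal to completeness or to Krull intersection. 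You should either add the Artinian (or complete/henselian, supplemented by an argument excluding components like $V(ty-1)$ with empty special fibre) hypothesis explicitly, or rework the two steps named above.
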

\begin{proof}
The ``only if'' part is trivial. Assume hence that $\mathbf{Y}(A)=\mathbf{Y}'(A)$.
Since $\mathbf{Y}$ and $\mathbf{Y}'$ are smooth over $A$, the canonical
maps $\mathbf{Y}(A)\rightarrow\mathbf{Y}(k)$ and $\mathbf{Y}'(A)\rightarrow\mathbf{Y}'(k)$
are surjective. Since they are both also restrictions of the map $\mathbf{X}(A)\rightarrow\mathbf{X}(k)$,
we obtain $\mathbf{Y}(k)=\mathbf{Y}'(k)$. Let $A[\mathbf{X}]$ be
the affine algebra of $\mathbf{X}$, which we identify with an algebra
of polynomial functions on $\mathbf{X}(A)$ by embedding $\mathbf{X}$
in affine space. For every subset $V\subseteq\mathbf{X}(A)$, let
\[
\mathcal{I}(V)=\{f\in A[\mathbf{X}]\mid f(x)=0,\text{ for all }x\in V\}.
\]
On the other hand, for any ideal $J$ in $A[\mathbf{X}]$, let
\[
\mathcal{V}(J)=\{p\in\mathbf{X}(A)\mid f(p)=0\text{ for all }f\in J\}=\Hom_{A}(A[\mathbf{X}]/J,A).
\]
 If $\overline{V}\subseteq\mathbf{X}_{k}(k)$ and $\overline{J}$
is an ideal in $A[\mathbf{X}]\otimes k$, then we write $\mathcal{I}_{k}(\overline{V})$
and $\mathcal{V}_{k}(\overline{J})$ for the analogous objects in
$A[\mathbf{X}]\otimes k$ and $\mathbf{X}_{k}(k)$, respectively.
$ $ Let $I$ and $I'$ be the ideals in $A[\mathbf{X}]$ defining
$\mathbf{Y}$ and $\mathbf{Y}'$, respectively. Note that we have
$\mathcal{V}(I)=\mathbf{Y}(A)$ and $\mathcal{V}(I')=\mathbf{Y}'(A)$.
Let $\mathfrak{m}$ be the maximal ideal in $A$. If $J$ is an ideal
in $A[\mathbf{X]}$, write $\overline{J}$ for its image in $A[\mathbf{X}]\otimes k\cong A[\mathbf{X}]/\mathfrak{m}A[\mathbf{X}]$.
Since the fibres $\mathbf{Y}_{k}$ and $\mathbf{Y}'_{k}$ are reduced,
$\overline{I}$ and $\overline{I}\vphantom{I}'$ $ $are radical ideals
of $A[\mathbf{X}]\otimes k$. We have $\mathcal{V}_{k}(\overline{I})=\mathbf{Y}_{k}(k)$
and $\mathcal{V}_{k}(\overline{I}\vphantom{I}')=\mathbf{Y}'_{k}(k)$,
and thus the Nullstellensatz yields 
\[
\overline{I}=\mathcal{I}_{k}(\mathbf{Y}_{k}(k))=\mathcal{I}_{k}(\mathbf{Y}'_{k}(k))=\overline{I}\vphantom{I}'.
\]
Denote by $\overline{\mathcal{V}(I)}$ the image of $\mathcal{V}(I)$
under the map $\mathbf{Y}(A)\rightarrow\mathbf{Y}(k)$. Since the
latter is surjective, we have $\overline{\mathcal{V}(I)}=\mathbf{Y}_{k}(k)$.
Hence, 
\begin{align*}
\overline{\mathcal{I}(\mathcal{V}(I))} & \subseteq\{\overline{f}\in A[\mathbf{X}]\otimes k\mid\overline{f}(\overline{x})=0,\text{ for all }\overline{x}\in\overline{\mathcal{V}(I)}\}\\
 & =\{\overline{f}\in A[\mathbf{X}]\otimes k\mid\overline{f}(\overline{x})=0,\text{ for all }\overline{x}\in\mathbf{Y}_{k}(k)\}\\
 & =\mathcal{I}_{k}(\mathbf{Y}_{k}(k))=\overline{I}.
\end{align*}
On the other hand, we obviously have $I\subseteq\mathcal{I}(\mathcal{V}(I))$,
hence $\overline{I}\subseteq\overline{\mathcal{I}(\mathcal{V}(I))}$,
and so $\overline{I}=\overline{\mathcal{I}(\mathcal{V}(I))}$. In
the same way we obtain $\overline{I}\vphantom{I}'=\overline{\mathcal{I}(\mathcal{V}(I'))}$.
This implies that
\[
\mathcal{I}(\mathcal{V}(I))=I+\mathfrak{m}\mathcal{I}(\mathcal{V}(I)),\quad\text{and}\quad\mathcal{I}(\mathcal{V}(I'))=I'+\mathfrak{m}\mathcal{I}(\mathcal{V}(I')).
\]
Since $A$ is Noetherian, any ideal in $A[\mathbf{X}]$ is finitely
generated, and we can apply Nakayama's lemma to get
\[
I=\mathcal{I}(\mathcal{V}(I)),\quad\text{and}\quad I'=\mathcal{I}(\mathcal{V}(I')).
\]
From the hypothesis $\mathbf{Y}(A)=\mathbf{Y}'(A)$ we then conclude
that
\[
I=\mathcal{I}(\mathcal{V}(I))=\mathcal{I}(\mathbf{Y}(A))=\mathcal{I}(\mathbf{Y}'(A))=\mathcal{I}(\mathcal{V}(I'))=I',
\]
and so $\mathbf{Y}=\mathbf{Y}'$.\end{proof}
\begin{rem*}
The author has been informed that Proposition~\ref{pro:Nullstellensatz}
is a consequence of a schematic density statement, due to Grothendieck
(\cite{EGA-IV}, III~11.10.9). We have however given a direct and
self-contained proof in the case that is of interest to us here. 
\end{rem*}
From now on, assume that $A$ is an Artinian local ring with algebraically
closed residue field $k$. We have the Greenberg functor $\mathcal{F}$
associated to $A$. We recall a well-known fact which will be used
several times in what follows: Suppose that $\mathbf{G}$ is an affine
group scheme of finite type over an algebraically closed field $k$.
Then $\mathbf{G}$ is smooth over $k$ if and only if it is reduced
over $k$ (cf.~\cite{waterhouse}, 11.6).
\begin{prop}
\label{pro:F and Tran}Let $\mathbf{G}$ be an affine group scheme
of finite type over $A$, acting on an affine scheme $\mathbf{X}$
of finite type over $A$, and let $\mathbf{Y}$ and $\mathbf{Z}$
be closed smooth subschemes of $\mathbf{X}$. Let $\mathcal{F}\mathbf{G}$
act on $\mathcal{F}\mathbf{X}$ via the action induced from that of
$\mathbf{G}$ on $\mathbf{X}$. Then 
\[
\mathcal{F}\T_{\mathbf{G}}(\mathbf{Y},\mathbf{Z})(k)=\T_{\mathcal{F}\mathbf{G}}(\mathcal{F}\mathbf{Y},\mathcal{F}\mathbf{Z})(k).
\]
\end{prop}
\begin{proof}
Let $\alpha:\mathbf{G}\rightarrow\underline{\Aut}(\mathbf{X})$ be
the action of $\mathbf{G}$ on $\mathbf{X}$. Then 
\[
\mathcal{F}\T_{\mathbf{G}}(\mathbf{Y},\mathbf{Z})(k)=\T_{\mathbf{G}}(\mathbf{Y},\mathbf{Z})(A)=\{g\in\mathbf{G}(A)\mid\alpha(A)(g)(\mathbf{Y})=\mathbf{Z}\}.
\]
Since $\alpha(A)(g)(\mathbf{Y})$ and $\mathbf{Z}$ are both closed
smooth subschemes of $\mathbf{G}$, Proposition~\ref{pro:Nullstellensatz}
implies that the condition $\alpha(A)(g)(\mathbf{Y})=\mathbf{Z}$
is equivalent to $(\alpha(A)(g))(A)(\mathbf{Y}(A))=\mathbf{Z}(A)$,
and so
\[
\mathcal{F}\T_{\mathbf{G}}(\mathbf{Y},\mathbf{Z})(k)=\{g\in\mathbf{G}(A)\mid(\alpha(A)(g))(A)(\mathbf{Y}(A))=\mathbf{Z}(A)\}.
\]
In the same way, the Nullstellensatz implies that
\begin{align*}
\T_{\mathcal{F}\mathbf{G}}(\mathcal{F}\mathbf{Y},\mathcal{F}\mathbf{Z})(k) & =\{g\in\mathcal{F}\mathbf{G}(k)\mid(\mathcal{F}(\alpha)(k)(g))(k)(\mathcal{F}\mathbf{Y}(k))=\mathcal{F}\mathbf{Z}(k)\}\\
 & =\{g\in\mathbf{G}(A)\mid(\alpha(A)(g))(A)(\mathbf{Y}(A))=\mathbf{Z}(A)\},
\end{align*}
and the result is proved.
\end{proof}
In the above proof we have used the fact that (under the hypotheses
of Proposition~\ref{pro:F and Tran}) the Nullstellensatz implies
that
\[
\T_{\mathcal{F}\mathbf{G}}(\mathcal{F}\mathbf{Y},\mathcal{F}\mathbf{Z})(k)=\T_{\mathcal{F}\mathbf{G}(k)}(\mathcal{F}\mathbf{Y}(k),\mathcal{F}\mathbf{Z}(k)),
\]
where the right-hand side is the set-theoretical strict transporter,
defined in the obvious way. Results of this type for normalisers and
centralisers over algebraically closed fields are well-known and appear
in, for example, \cite{Demazure-Gabriel-English} II, \S5, 4.1 and
\cite{Jantzen}, I, 2.6. More generally, Proposition~\ref{pro:Nullstellensatz}
implies that if $\mathbf{G}$ is affine of finite type over $A$ and
the subschemes $\mathbf{Y}$ and $\mathbf{Z}$ are smooth, then 
\[
\T_{\mathbf{G}}(\mathbf{Y},\mathbf{Z})(A)=\T_{\mathbf{G}(A)}(\mathbf{Y}(A),\mathbf{Z}(A)),
\]
where the right-hand side is the set-theoretical strict transporter.
\begin{cor}
\label{cor:F - Norm}Let $\mathbf{G}$ be an affine group scheme of
finite type over $A$, and let $\mathbf{H}$ be a closed smooth subgroup
scheme. Then
\[
\mathcal{F}\N_{\mathbf{G}}(\mathbf{H})(k)=\N_{\mathcal{F}\mathbf{G}}(\mathcal{F}\mathbf{H})(k).
\]
\end{cor}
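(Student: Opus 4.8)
The plan is to deduce this immediately from Theorem~\ref{thm:F and Tran} by specialising to the conjugation action. I would take $\mathbf{X}=\mathbf{G}$, let $\mathbf{G}$ act on itself by conjugation via $\gamma_{\mathbf{G}}:\mathbf{G}\times\mathbf{G}\rightarrow\mathbf{G}$, and set $\mathbf{Y}=\mathbf{Z}=\mathbf{H}$. With these choices the strict transporter $\T_{\mathbf{G}}(\mathbf{H},\mathbf{H})$ is, by definition, the normaliser $\N_{\mathbf{G}}(\mathbf{H})$, so the conclusion of the theorem reads $\mathcal{F}\N_{\mathbf{G}}(\mathbf{H})=\T_{\mathcal{F}\mathbf{G}}(\mathcal{F}\mathbf{H},\mathcal{F}\mathbf{H})$, and it remains only to identify the right-hand side with $\N_{\mathcal{F}\mathbf{G}}(\mathcal{F}\mathbf{H})$.

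First I would check that the hypotheses of Theorem~\ref{thm:F and Tran} are met. Since $\mathbf{G}$ is affine of finite type over $A$, so is $\mathbf{X}=\mathbf{G}$, and the conjugation action is a morphism of the required type. The subschemes $\mathbf{Y}=\mathbf{Z}=\mathbf{H}$ are closed and smooth by assumption; and because $\mathbf{H}$ is smooth over $A$, its fibre $\mathbf{H}_{k}$ is smooth, hence regular, hence reduced over $k$, so the reducedness-of-fibres hypothesis holds. Finally, the representability of $\T_{\mathbf{G}}(\mathbf{H},\mathbf{H})=\N_{\mathbf{G}}(\mathbf{H})$ by a closed smooth subscheme of $\mathbf{G}$ is precisely the standing assumption of the corollary.

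It then remains to identify the two sides of the transporter identity with the corresponding normalisers. The left-hand side $\mathcal{F}\T_{\mathbf{G}}(\mathbf{H},\mathbf{H})$ equals $\mathcal{F}\N_{\mathbf{G}}(\mathbf{H})$ by the definition of the normaliser. For the right-hand side, the action of $\mathcal{F}\mathbf{G}$ on $\mathcal{F}\mathbf{X}=\mathcal{F}\mathbf{G}$ is, by construction in the theorem, the one induced from $\gamma_{\mathbf{G}}$; and it was established earlier in the paper that the Greenberg functor preserves the conjugation action, i.e.\ $\mathcal{F}(\gamma_{\mathbf{G}})=\gamma_{\mathcal{F}\mathbf{G}}$. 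Hence this induced action is exactly the conjugation action of $\mathcal{F}\mathbf{G}$ on itself, so $\T_{\mathcal{F}\mathbf{G}}(\mathcal{F}\mathbf{H},\mathcal{F}\mathbf{H})=\N_{\mathcal{F}\mathbf{G}}(\mathcal{F}\mathbf{H})$, and the corollary follows.

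As this is a direct specialisation of Theorem~\ref{thm:F and Tran}, there is no serious obstacle beyond bookkeeping. The only point demanding care is the identification $\T_{\mathcal{F}\mathbf{G}}(\mathcal{F}\mathbf{H},\mathcal{F}\mathbf{H})=\N_{\mathcal{F}\mathbf{G}}(\mathcal{F}\mathbf{H})$, which rests on the compatibility $\mathcal{F}(\gamma_{\mathbf{G}})=\gamma_{\mathcal{F}\mathbf{G}}$ --- that is, on knowing that the action $\mathcal{F}\mathbf{G}$ inherits from conjugation on $\mathbf{G}$ is \emph{literally} conjugation on $\mathcal{F}\mathbf{G}$, rather than merely some action with the same transporter.
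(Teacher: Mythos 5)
Your proposal is correct and follows essentially the same route as the paper's own proof: apply Theorem~\ref{thm:F and Tran} with $\mathbf{Y}=\mathbf{Z}=\mathbf{H}$ and the conjugation action, using the compatibility $\mathcal{F}(\gamma_{\mathbf{G}})=\gamma_{\mathcal{F}\mathbf{G}}$ established in Section~\ref{sec:Transp-norm} to identify the resulting transporter with $\N_{\mathcal{F}\mathbf{G}}(\mathcal{F}\mathbf{H})$. Your justification that $\mathbf{H}_{k}$ is reduced (smooth, hence regular, hence reduced) is a harmless variant of the paper's appeal to the fact that a smooth group scheme over an algebraically closed field is reduced.
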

\begin{proof}
We have seen that $\mathcal{F}$ transforms the conjugation action
of a group functor $\mathbf{G}$ over $A$ on itself, into the conjugation
action of $\mathcal{F}\mathbf{G}$ on itself. Now apply Theorem~\ref{pro:F and Tran}
with $\mathbf{Y}=\mathbf{Z}=\mathbf{H}$, and $\mathbf{G}$ acting
by conjugation.
\end{proof}
A group scheme $\mathbf{G}$ over $R$ is called \emph{reductive}
if it is affine and smooth over $R$, and if all its geometric fibres
$\mathbf{G}_{\overline{k(s)}}$ are connected reductive groups in
the classical sense (cf.~\cite{Demazure}, 2.1 or \cite{SGA3}, XIX~2.7).
If $\mathbf{G}$ is a reductive group over $R$, a \emph{maximal torus}
(resp.~a \emph{Borel subgroup}, resp.~a \emph{parabolic subgroup})
of $\mathbf{G}$ is a smooth subgroup scheme $\mathbf{H}$, such that
each geometric fibre $\mathbf{H}_{\overline{k(s)}}$ is a maximal
torus (resp.~a Borel subgroup, resp.~a parabolic subgroup) of $\mathbf{G}_{\overline{k(s)}}$,
in the classical sense (cf.~\cite{SGA3}, XV~6.1).

The following lemma gives the most important situations where the
normaliser is representable by a closed smooth subscheme. This provides
the cases for which we will subsequently apply Corollary~\ref{cor:F - Norm}.
\begin{lem}
\label{lem:Norm T - P}Let $\mathbf{G}$ be a reductive group scheme
over $A$. Let $\mathbf{T}$ be a maximal torus of $\mathbf{G}$,
and let $\mathbf{P}$ be a parabolic subgroup of $\mathbf{G}$. Then
$\N_{\mathbf{G}}(\mathbf{T})$ and $\N_{\mathbf{G}}(\mathbf{P})$
are representable by closed smooth subschemes of $\mathbf{G}$, respectively.
Moreover, we have
\[
\N_{\mathbf{G}}(\mathbf{T})^{\circ}=\mathbf{T},\quad\text{and}\quad\N_{\mathbf{G}}(\mathbf{P})=\mathbf{P}.
\]
\end{lem}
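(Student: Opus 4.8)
The plan is to treat $\mathbf{T}$ and $\mathbf{P}$ separately, in each case reading off representability, closedness and smoothness from the structure theory of reductive group schemes in SGA~3, the only substantive point being the two identity-component computations. I would note at the outset that both $\mathbf{T}$ and $\mathbf{P}$ are smooth closed subgroup schemes of $\mathbf{G}$ by the definitions recalled above, so smoothness of the subgroups themselves needs no separate argument.

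For the maximal torus I would invoke the Weyl group scheme (\cite{SGA3}, XII): for reductive $\mathbf{G}$ the normaliser $\N_{\mathbf{G}}(\mathbf{T})$ is representable by a closed subgroup scheme, and the fppf quotient $\mathbf{W}:=\N_{\mathbf{G}}(\mathbf{T})/\mathbf{T}$ is finite \'etale over $A$, giving an exact sequence $1\to\mathbf{T}\to\N_{\mathbf{G}}(\mathbf{T})\to\mathbf{W}\to1$. Smoothness of $\N_{\mathbf{G}}(\mathbf{T})$ then follows because it is an extension of the \'etale group scheme $\mathbf{W}$ by the smooth torus $\mathbf{T}$: the quotient map is a $\mathbf{T}$-torsor, hence smooth, and $\mathbf{W}\to\Spec A$ is \'etale. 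For the identity component, since $\mathbf{W}$ is finite \'etale its identity component is the unit section, so $\N_{\mathbf{G}}(\mathbf{T})^{0}$, being the preimage of $\mathbf{W}^{0}$, is the kernel $\mathbf{T}$. Concretely over the Artinian base it suffices to compare closed fibres, using that $\N_{\mathbf{G}}(\mathbf{T})$ is compatible with the base change to $k$: here $\mathbf{T}_{k}$ is connected and $(\N_{\mathbf{G}}(\mathbf{T})_{k})^{0}=(\N_{\mathbf{G}_{k}}(\mathbf{T}_{k}))^{0}=\mathbf{T}_{k}$ by the classical fact that the normaliser of a maximal torus in a connected reductive group over $k$ has identity component equal to the torus.

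For the parabolic subgroup the whole assertion is a rigidity property of parabolics in \cite{SGA3} (XXII and XXVI): $\mathbf{P}$ is closed in $\mathbf{G}$ and equals its own scheme-theoretic normaliser, $\N_{\mathbf{G}}(\mathbf{P})=\mathbf{P}$ (\cite{SGA3}, XXII~5.8.5). Thus representability (by $\mathbf{P}$ itself), closedness, smoothness (already by definition), and the identity $\N_{\mathbf{G}}(\mathbf{P})=\mathbf{P}$ are all immediate consequences of that result, with nothing further to prove.

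The only genuinely non-formal point, and the place I expect to need care, is the equality $\N_{\mathbf{G}}(\mathbf{T})^{0}=\mathbf{T}$ over the non-reduced base $A$: one must ensure that the connected component in the fibrewise sense of Section~\ref{sec:Greenberg functor} really equals the subgroup scheme $\mathbf{T}$, and not merely $\mathbf{T}_{k}$ on the closed fibre. Since $\mathbf{T}\subseteq\N_{\mathbf{G}}(\mathbf{T})^{0}$ are closed smooth $A$-subgroup schemes of $\mathbf{G}$ agreeing on the closed fibre, I would settle this using that a closed immersion of smooth $A$-schemes which is an isomorphism on the closed fibre is an isomorphism, or equivalently by applying Proposition~\ref{pro:Nullstellensatz} to $\mathbf{T}$ and $\N_{\mathbf{G}}(\mathbf{T})^{0}$, so that the Artinian structure introduces no extra difficulty beyond the classical statement over $k$.
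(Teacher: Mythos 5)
Your proposal is correct, and its backbone is the same as the paper's: the paper's proof is purely a list of citations --- representability, closedness and smoothness of both normalisers together with $\N_{\mathbf{G}}(\mathbf{T})^{0}=\mathbf{T}$ from \cite{SGA3}, XII~7.9 (see also XXII~5.3.10 and XXII~5.2.2), and $\N_{\mathbf{G}}(\mathbf{P})=\mathbf{P}$ from XXII~5.8.5 --- and you invoke the same sources for representability and for the entire parabolic half. Where you genuinely depart is the torus half: instead of quoting smoothness and the component computation, you re-derive them from $1\to\mathbf{T}\to\N_{\mathbf{G}}(\mathbf{T})\to\mathbf{W}\to 1$ with $\mathbf{W}$ finite \'etale, which is a legitimate, more self-contained route that makes visible why the Artinian base is harmless; the paper's citation of XII~7.9, stated over an arbitrary base, already absorbs the worry in your final paragraph. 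Two small points there: your two fallback arguments are not equivalent, since Proposition~\ref{pro:Nullstellensatz} takes equality of $A$-points as hypothesis, which does not follow from agreement on the closed fibre alone (you would still need injectivity of $\mathbf{W}(A)\to\mathbf{W}(k)$ to get $\mathbf{T}(A)=\N_{\mathbf{G}}(\mathbf{T})^{0}(A)$), whereas your Nakayama-style formulation (a closed immersion of smooth, hence $A$-flat, schemes that is an isomorphism on the closed fibre is an isomorphism) is correct as stated and suffices; and closedness of $\mathbf{T}$ and $\mathbf{P}$ in $\mathbf{G}$ is not literally part of the paper's definitions, which only require smooth subgroup schemes with the right geometric fibres --- it too comes from SGA~3 (e.g.\ XI for $\mathbf{T}$ as its own centraliser, XXVI~1 for $\mathbf{P}$).
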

\begin{proof}
All the statements concerning the representability of $\N_{\mathbf{G}}(\mathbf{T})$
and $\N_{\mathbf{G}}(\mathbf{P})$ follow from \cite{SGA3}, XII~7.9
(see also XXII~5.3.10). The fact that $\N_{\mathbf{G}}(\mathbf{T})^{\circ}=\mathbf{T}$
is part of \cite{SGA3}, XII~7.9 (see also XXII~5.2.2). Finally,
the statement $\N_{\mathbf{G}}(\mathbf{P})=\mathbf{P}$ is contained
in \cite{SGA3}, XXII~5.8.5 (see also XIV~4.8-4.8.1 and XXVI~1.2).
\end{proof}

\section{\label{sec:Alg grps}The associated algebraic groups}

We keep our assumption that $A$ is an Artinian local ring with algebraically
closed residue field $k$. Let $\mathfrak{m}$ be the maximal ideal
of $A$. Let $\mathbf{G}$ be an affine smooth group scheme over $A$.
Define the group
\[
G=(\mathcal{F}\mathbf{G})(k),
\]
and for any integer $r\geq0$, let
\[
G_{r}=\mathcal{F}(\mathbf{G}\times_{A}A/\mathfrak{m}^{r})(k).
\]
Note that for $r=0$ the ring $A/\mathfrak{m}^{r}$ is the trivial
ring $\{0=1\}$, so $G_{0}$ consists of exactly one point. On the
other hand, if $\mathfrak{m}^{r}=0$, then $G=G_{r}$. Since $\mathbf{G}$
is smooth it follows from the infinitesimal criterion for smoothness
that for any integers $r\geq r'\geq0$, the canonical reduction map
$A/\mathfrak{m}^{r}\rightarrow A/\mathfrak{m}^{r'}$ induces a surjective
homomorphism $\rho_{r,r'}:G_{r}\rightarrow G_{r'}$. The kernel of
$\rho_{r,r'}$ is denoted by $G_{r}^{r'}$. In particular, when $G=G_{r}$
we write $\rho_{r'}$ for $\rho_{r,r'}$ and $G^{r'}$ for $G_{r}^{r'}$.

We will refer to affine smooth group schemes over $k$ as \emph{linear
algebraic groups}. This coincides with the classical notion of linear
algebraic group, as defined for example in \cite{humphreys}. Hence
$G_{r}$ is a linear algebraic group over $k$, for any $r\geq0$.
If $\mathbf{H}$ is a subgroup scheme of $\mathbf{G}$, we will write
$H$ for the corresponding closed subgroup $\mathcal{F}\mathbf{H}(k)$
of $G$.
\begin{lem}
\label{lem:Conn-unip}Suppose that we have an exact sequence of linear
algebraic groups
\[
1\longrightarrow K\longrightarrow L\xrightarrow{\ \alpha\ }M\longrightarrow1.
\]
If $K$ and $M$ are connected (resp.~unipotent), then $L$ is connected
(resp.~unipotent).\end{lem}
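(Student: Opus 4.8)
The plan is to treat the connected and unipotent assertions separately, in each case working with the $k$-points and using that these are the $k$-points of linear algebraic groups over the algebraically closed field $k$. Here the exactness of the sequence means that $\alpha\colon B\to C$ is a surjective homomorphism of linear algebraic groups whose kernel is (the image of) $A$, so that $A$ is a closed normal subgroup of $B$ and $C\cong B/A$.

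For the connected case, I would use the identity component $B^{0}$, which is a closed normal subgroup of finite index in $B$. Since $A$ is connected and contains the identity, $A\subseteq B^{0}$. The restriction $\alpha|_{B^{0}}\colon B^{0}\to C$ has image $\alpha(B^{0})$, which is a closed subgroup of $C$ (the image of a homomorphism of algebraic groups is closed). As $B^{0}$ has finite index in $B$, its image $\alpha(B^{0})$ has finite index in $\alpha(B)=C$; a closed subgroup of finite index is open and hence contains the identity component, so $\alpha(B^{0})\supseteq C^{0}=C$, giving $\alpha(B^{0})=C$. Thus $\alpha|_{B^{0}}$ is already surjective, and combined with $\ker\alpha=A\subseteq B^{0}$ this forces $B=B^{0}$: indeed, for any $b\in B$ there is $b_{0}\in B^{0}$ with $\alpha(b)=\alpha(b_{0})$, whence $bb_{0}^{-1}\in A\subseteq B^{0}$ and so $b\in B^{0}$. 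Hence $B$ is connected.

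For the unipotent case, I would invoke the Jordan decomposition together with the fact that homomorphisms of linear algebraic groups preserve it. Let $b\in B$ and write $b=b_{s}b_{u}$ with $b_{s}$ semisimple, $b_{u}$ unipotent, and $b_{s}b_{u}=b_{u}b_{s}$. Then $\alpha(b)=\alpha(b_{s})\alpha(b_{u})$ is the Jordan decomposition of $\alpha(b)$ in $C$, so that $\alpha(b_{s})$ is the semisimple part of $\alpha(b)$. Since every element of the unipotent group $C$ is unipotent, $\alpha(b_{s})$ is simultaneously semisimple and unipotent, hence $\alpha(b_{s})=1$, and so $b_{s}\in\ker\alpha=A$. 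But $A$ is unipotent, so $b_{s}$ is unipotent; being also semisimple, we conclude $b_{s}=1$, and therefore $b=b_{u}$ is unipotent. As $b$ was arbitrary, every element of $B$ is unipotent, i.e.\ $B$ is unipotent.

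Both arguments are standard consequences of the structure theory of linear algebraic groups over an algebraically closed field, and I do not anticipate a serious obstacle. The only point requiring a little care is the connected case, where one must ensure that $\alpha(B^{0})$ is genuinely a closed subgroup of finite index so that openness of finite-index subgroups forces it to exhaust the connected group $C$. The relevant facts (closedness of images of homomorphisms, the identity-component formalism, and the preservation of Jordan decomposition under morphisms) are all available in the references already cited, since $k$ is algebraically closed.
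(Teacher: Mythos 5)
Your proof is correct and follows essentially the same route as the paper: for connectedness you show $\alpha(B^{0})=C$ and then use $\ker\alpha=A\subseteq B^{0}$ to pull every element of $B$ into $B^{0}$, and for unipotence you use preservation of the Jordan decomposition under $\alpha$ to force $b_{s}\in A$ and hence $b_{s}=1$. The only differences are cosmetic: you derive $\alpha(B^{0})=C$ from the closed-image/finite-index-is-open argument where the paper simply quotes $\alpha(B^{0})=\alpha(B)^{0}$, and in the unipotent case you apply the Jordan decomposition to $b$ directly, avoiding the paper's intermediate choice of a unipotent preimage $y$.
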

\begin{proof}
Assume that $K$ and $M$ are connected. Then $\alpha(L^{\circ})=\alpha(L)^{\circ}=M$,
and so we have an exact sequence $1\rightarrow K\rightarrow L^{\circ}\xrightarrow{\alpha}M\rightarrow1$.
Thus, for $x\in L$, there exists a $y\in L^{\circ}$ such that $\alpha(x)=\alpha(y)$.
Since $K$ lies in $L^{\circ}$, we must have $x\in L^{\circ}$. Now
assume that $K$ and $M$ are unipotent. Then the set $L_{u}$ of
unipotent elements satisfies $\alpha(L_{u})=\alpha(L)_{u}=M$, so
$\alpha$ maps $L_{u}$ surjectively onto $M$, and $K=\{z\in L_{u}\mid\alpha(z)=1\}$.
Thus, for $x\in L$, there exists a $y\in L_{u}$ such that $\alpha(x)=\alpha(y)$.
Let $x=x_{s}x_{u}$ and $y=y_{s}y_{u}$ be the Jordan decomposition
of $x$ and $y$, respectively. Then $\alpha(x_{s})=\alpha(x)_{s}=\alpha(y)_{s}=\alpha(y_{s})=1$,
so $x_{s}\in K$. Since $x_{s}$ is semisimple and $K$ consists of
unipotent elements, we have $x_{s}=1$, and so $x\in L_{u}$.
\end{proof}
Assume that $\alpha:L\rightarrow M$ is a surjective morphism of linear
algebraic groups with connected kernel $K$. We then get an exact
sequence 
\[
1\longrightarrow K\longrightarrow\alpha^{-1}(M^{\circ})\xrightarrow{\ \alpha\ }M^{\circ}\longrightarrow1,
\]
and it follows from the above lemma that $\alpha^{-1}(M^{\circ})$
is connected. Since $L^{\circ}\subseteq\alpha^{-1}(M^{\circ})\subseteq L$,
we must in fact have $L^{\circ}=\alpha^{-1}(M^{\circ})$. 

In \cite{Neron_models}, p.~277, it is stated (without proof) that
the Greenberg functor respects connected components of smooth group
schemes. The following result provides a proof of this. Note that
this could also be proved using \cite{greenberg2}, p.~264, Corollary~2,
mentioned in the introduction.
\begin{lem}
\label{lem:Conn comp}Let $\mathbf{G}$ be an affine smooth group
scheme over $A$. Then 
\[
\mathcal{F}(\mathbf{G}^{\circ})=(\mathcal{F}\mathbf{G})^{\circ}.
\]
\end{lem}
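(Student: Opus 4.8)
The plan is to reduce the asserted equality of subschemes to an equality of $k$-points, and then to identify both sets of $k$-points with the preimage of the identity component of the special fibre under the reduction map $\rho_{1}$.

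First I would note that both $\mathcal{F}(\mathbf{G}^{0})$ and $(\mathcal{F}\mathbf{G})^{0}$ are smooth closed subgroup schemes of the linear algebraic group $\mathcal{F}\mathbf{G}$. Indeed, since $\mathbf{G}$ is smooth over $A$, its connected component $\mathbf{G}^{0}$ is open in $\mathbf{G}$ (\cite{SGA3}, VI$_{\mathrm{B}}$ 3.10); as $\mathcal{F}$ preserves open subschemes and smoothness, $\mathcal{F}(\mathbf{G}^{0})$ is a smooth open subgroup of $\mathcal{F}\mathbf{G}$, and an open subgroup of a linear algebraic group over a field is automatically closed. The identity component $(\mathcal{F}\mathbf{G})^{0}$ is likewise open, closed and smooth. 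Both are therefore reduced closed subschemes of the affine variety $\mathcal{F}\mathbf{G}$ over the algebraically closed field $k$, so by the classical Nullstellensatz (equivalently Proposition~\ref{pro:Nullstellensatz} with base $k$) it suffices to prove the equality of $k$-points $\mathcal{F}(\mathbf{G}^{0})(k)=(\mathcal{F}\mathbf{G})^{0}(k)$.

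For the left-hand side I would use the functor-of-points description of the connected component over the Artinian local ring $A$, namely
\[
\mathcal{F}(\mathbf{G}^{0})(k)=\mathbf{G}^{0}(A)=\{g\in\mathbf{G}(A)\mid g_{\mathfrak{m}}\in\mathbf{G}_{k}^{0}(k)\}.
\]
Writing $\rho_{1}\colon G\to G_{1}=\mathbf{G}_{k}(k)$ for the reduction map induced by $A\to A/\mathfrak{m}=k$, we have $\rho_{1}(g)=g_{\mathfrak{m}}$, and $(G_{1})^{0}=\mathbf{G}_{k}^{0}(k)$ because $\mathbf{G}_{k}$ is a smooth algebraic group. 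Thus the left-hand side is exactly $\rho_{1}^{-1}((G_{1})^{0})$. For the right-hand side, the discussion following Lemma~\ref{lem:Conn-unip} shows that if the kernel $G^{1}=\ker\rho_{1}$ is connected, then $G^{0}=\rho_{1}^{-1}((G_{1})^{0})$; combining the two identifications would then give the desired equality of $k$-points.

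The hard part is therefore to establish that $G^{1}$ is connected. Here I would exploit the filtration $G^{1}\supseteq G^{2}\supseteq\cdots\supseteq G^{N}=\{1\}$, where $N$ is chosen with $\mathfrak{m}^{N}=0$, arising from the reduction maps $\rho_{i}\colon G\to G_{i}$. Since each extension $A/\mathfrak{m}^{i+1}\to A/\mathfrak{m}^{i}$ has square-zero kernel, smoothness of $\mathbf{G}$ together with the infinitesimal lifting property identifies each successive quotient $G^{i}/G^{i+1}\cong\ker\rho_{i+1,i}$ with the vector group $\mathrm{Lie}(\mathbf{G}_{k})\otimes_{k}(\mathfrak{m}^{i}/\mathfrak{m}^{i+1})$, which is connected. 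Applying Lemma~\ref{lem:Conn-unip} repeatedly up this filtration then shows that $G^{1}$ is connected (in fact connected unipotent), which completes the argument. I expect this identification of the graded pieces as vector groups to be the main technical obstacle, since it requires translating the smoothness of $\mathbf{G}$ over $A$ into the structure of the kernel of reduction at the level of the associated algebraic groups.
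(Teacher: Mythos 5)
Your proof is correct and follows the same skeleton as the paper's: reduce the equality of subschemes to an equality of $k$-points via reducedness of smooth groups over the algebraically closed field $k$, identify $\mathcal{F}(\mathbf{G}^{0})(k)$ with $\rho_{1}^{-1}(\mathbf{G}_{k}^{0}(k))$, and prove connectedness of the congruence kernel by climbing the filtration with repeated applications of Lemma~\ref{lem:Conn-unip}. The one genuine divergence is at the step you rightly flag as the technical crux: where you propose to identify the graded kernels $\ker\rho_{i+1,i}$ with $\mathrm{Lie}(\mathbf{G}_{k})\otimes_{k}\mathfrak{m}^{i}/\mathfrak{m}^{i+1}$ by square-zero deformation theory, the paper simply cites Greenberg's structure theorem (\cite{greenberg2}, 2), which asserts exactly that these kernels are connected (indeed unipotent). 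If you carry out your identification, two cautions are in order. First, the lifting argument must be run functorially, i.e.\ over $\mathbf{A}(R)$-points for varying $k$-algebras $R$: a bijection of abstract groups on $k$-points alone does not establish that the kernel is connected \emph{as an algebraic group}; you need an isomorphism of group functors, which the torsor description of lifts does provide. Second, in mixed characteristic the resulting $k$-group structure involves Frobenius twists of the Lie algebra coming from the Witt vector structure of $\mathbf{A}$, so the naive untwisted description is not quite accurate --- though the kernel is still isomorphic to affine space as a scheme, which is all that connectedness requires; this is precisely the content of Greenberg's theorem. Finally, your endgame differs only cosmetically: you apply the filtration to $G=\mathcal{F}\mathbf{G}(k)$ and conclude via the observation that $B^{0}=\alpha^{-1}(C^{0})$ when the kernel is connected (valid here since $\rho_{1}$ is surjective by smoothness --- note that, despite the paper's phrasing, this observation does require surjectivity), whereas the paper runs the filtration on $\mathcal{F}(\mathbf{G}^{0})$, shows that group is connected, and concludes by maximality of the identity component; since $\ker\rho_{1}$ is the same group in both arguments, the two routes are equivalent.
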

\begin{proof}
Since $\mathbf{G}$ is smooth over $A$, the same is true for $\mathbf{G}^{\circ}$.
As we have noted earlier, any smooth group scheme over an algebraically
closed field is reduced. By the Nullstellensatz it is then enough
to show that
\[
\mathcal{F}(\mathbf{G}^{\circ})(k)=(\mathcal{F}\mathbf{G})^{\circ}(k).
\]
Unraveling the definitions, we have
\[
\mathcal{F}(\mathbf{G}^{\circ})(k)=\mathbf{G}^{\circ}(A)=\{g\in\mathbf{G}(A)\mid g_{k}\in\mathbf{G}_{k}^{\circ}(k)\}=\{g\in\mathcal{F}\mathbf{G}(k)\mid\rho_{1}(g)\in\mathbf{G}_{k}^{\circ}(k)\}.
\]
Write $G^{\circ}$ for $(\mathcal{F}\mathbf{G}^{\circ})(k)$. Since
$\mathbf{G}^{\circ}$ is an affine smooth group scheme over $A$ with
connected fibre, it follows from Greenberg's structure theorem \cite{greenberg2},
2 (see also 3), that each kernel $(G^{\circ})_{r+1}^{r}$ is connected.
For every integer $r\geq0$, we have an exact sequence
\[
1\longrightarrow(G^{\circ})_{r+1}^{r}\longrightarrow(G^{\circ})_{r+1}^{1}\xrightarrow{\,\rho_{r+1,r}\,}(G^{\circ})_{r}^{1}\longrightarrow1.
\]
By repeated use of Lemma~\ref{lem:Conn-unip}, using the fact that
$(G^{\circ})_{1}\cong\mathbf{G}_{k}^{\circ}(k)$ is connected, it
follows that the kernel $(G^{\circ})^{1}$ is connected. Hence $G^{\circ}$
sits in the exact sequence
\[
1\longrightarrow(G^{\circ})^{1}\longrightarrow G^{\circ}\xrightarrow{\,\rho_{1}\,}(G^{\circ})_{1}\longrightarrow1,
\]
and it follows from Lemma~\ref{lem:Conn-unip} that $G^{\circ}$
is connected. Since $G^{\circ}=\rho_{1}^{-1}(\mathbf{G}_{k}^{\circ}(k))$,
it contains the connected component $(\mathcal{F}\mathbf{G})^{\circ}(k)$
of $(\mathcal{F}\mathbf{G})(k)=G$, and the maximality of the latter
forces $G^{\circ}=(\mathcal{F}\mathbf{G})^{\circ}(k)$. 
\end{proof}
For any linear algebraic group $G$, let $R_{u}(G)$ denote its unipotent
radical.
\begin{prop}
\label{pro:Unip rad and B}Let $\mathbf{G}$ be an affine smooth group
scheme over $A$, such that the fibre $\mathbf{G}_{k}$ is connected
. Then $G$ is connected, and $R_{u}(G)=\rho_{1}^{-1}(R_{u}(G_{1}))$.
In particular, if $\mathbf{G}$ is a reductive group scheme over $A$,
then $G$ is connected and $R_{u}(G)=G^{1}$. Moreover, let $\mathbf{B}$
be a Borel subgroup in $\mathbf{G}$. Then $BG^{1}$ is a Borel subgroup
in $G$.\end{prop}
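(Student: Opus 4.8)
The plan is to analyse everything through the reduction homomorphism $\rho_{1}\colon G\to G_{1}$, whose kernel is $G^{1}=G^{1}_{r}$ (for $r$ with $\mathfrak{m}^{r}=0$, so that $G=G_{r}$), and to reduce each assertion to the connected group $G_{1}=\mathbf{G}_{k}(k)$ by means of the extension-closure properties recorded in Lemma~\ref{lem:Conn-unip}. The central structural input, which I would establish first, is that the congruence kernel $G^{1}$ is both connected and unipotent. By Greenberg's structure theorem (\cite{greenberg2}, 2) each successive kernel $\ker(\rho_{j+1,j})$ is a vector group, hence connected and unipotent; applying Lemma~\ref{lem:Conn-unip} repeatedly to the exact sequences $1\to\ker(\rho_{j+1,j})\to G^{1}_{j+1}\to G^{1}_{j}\to1$, exactly as in the proof of Lemma~\ref{lem:Conn comp}, shows that $G^{1}$ is connected and unipotent. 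This is the step I expect to be the main obstacle; everything afterwards is formal manipulation with $\rho_{1}$. Connectedness of $G$ then follows immediately: since $\mathbf{G}_{k}$ is connected we have $G_{1}=\mathbf{G}_{k}(k)$ connected, so applying Lemma~\ref{lem:Conn-unip} to $1\to G^{1}\to G\xrightarrow{\,\rho_{1}\,}G_{1}\to1$ gives that $G$ is connected (alternatively this is immediate from Lemma~\ref{lem:Conn comp}, as $\mathbf{G}=\mathbf{G}^{0}$ when the fibre is connected).

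For the unipotent radical I would set $N=\rho_{1}^{-1}(R_{u}(G_{1}))$ and prove $N=R_{u}(G)$ by two inclusions. Since $\rho_{1}$ is surjective and $R_{u}(G_{1})\trianglelefteq G_{1}$, the subgroup $N$ is normal in $G$, and it sits in an exact sequence $1\to G^{1}\to N\to R_{u}(G_{1})\to1$ whose outer terms are connected unipotent; hence $N$ is connected unipotent by Lemma~\ref{lem:Conn-unip}, so $N\subseteq R_{u}(G)$. Conversely, $\rho_{1}(R_{u}(G))$ is a connected normal unipotent subgroup of $G_{1}$, hence contained in $R_{u}(G_{1})$, which gives $R_{u}(G)\subseteq\rho_{1}^{-1}(R_{u}(G_{1}))=N$. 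This proves $R_{u}(G)=\rho_{1}^{-1}(R_{u}(G_{1}))$. When $\mathbf{G}$ is reductive, $G_{1}=\mathbf{G}_{k}(k)$ is a connected reductive group, so $R_{u}(G_{1})=1$ and therefore $R_{u}(G)=\rho_{1}^{-1}(1)=G^{1}$, while $G$ is connected by the first step.

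For the Borel statement, with $\mathbf{G}$ reductive, I would first identify $BG^{1}$ as a preimage. Because $\mathbf{B}$ is smooth over $A$, the infinitesimal lifting property shows that $\rho_{1}$ restricts to a surjection $B\to\mathbf{B}_{k}(k)=:B_{1}$, and $B_{1}$ is a Borel subgroup of the connected reductive group $G_{1}$. Since $G^{1}=\ker\rho_{1}\subseteq BG^{1}$ and $\rho_{1}(BG^{1})=B_{1}$, a short coset computation yields $BG^{1}=\rho_{1}^{-1}(B_{1})$. Now $G^{1}=R_{u}(G)$ by the reductive case just proved, so $\rho_{1}$ realises the quotient $\rho_{1}\colon G\to G/R_{u}(G)\cong G_{1}$. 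Because the unipotent radical is contained in every Borel subgroup and $G/R_{u}(G)$ is reductive, taking preimages gives a bijection between the Borel subgroups of $G_{1}$ and those of $G$ (cf.~\cite{borel}); as $B_{1}$ is a Borel subgroup of $G_{1}$, its preimage $BG^{1}$ is a Borel subgroup of $G$. The only subtleties here are the identification $BG^{1}=\rho_{1}^{-1}(B_{1})$ and the surjectivity of $B\to B_{1}$, both of which rest on smoothness of $\mathbf{B}$.
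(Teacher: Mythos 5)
Your proof is correct and follows essentially the same route as the paper: Greenberg's structure theorem plus Lemma~\ref{lem:Conn-unip} to show $G^{1}$ is connected and unipotent, the maximality/two-inclusion argument giving $R_{u}(G)=\rho_{1}^{-1}(R_{u}(G_{1}))$, and the identification $BG^{1}=\rho_{1}^{-1}(B_{1})$ via smoothness of $\mathbf{B}$. The only (harmless) divergence is the final step: the paper concludes that $BG^{1}$, being solvable and containing a Borel subgroup, must equal that Borel, whereas you invoke the preimage correspondence on Borel subgroups induced by $\rho_{1}$ (using $R_{u}(G)\subseteq B'$ for every Borel $B'$) --- both rest on the same classical fact (\cite{humphreys}, 21.3C) that surjective morphisms of connected groups send Borels to Borels.
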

\begin{proof}
The connectedness of $G$ follows from Lemma~\ref{lem:Conn comp}.
It is well-known that surjective morphisms between linear algebraic
groups respect unipotent radicals; hence, $\rho_{1}(R_{u}(G))=R_{u}(G_{1})$.
As in the proof of Lemma~\ref{lem:Conn comp}, it follows from Greenberg's
structure theorem and Lemma~\ref{lem:Conn-unip} that $G^{1}$ is
connected and unipotent. By definition, $R_{u}(G)$ is the biggest
closed connected unipotent normal subgroup of $G$, so $G^{1}$ sits
inside $R_{u}(G)$, and we have an exact sequence
\[
1\longrightarrow G^{1}\longrightarrow R_{u}(G)\xrightarrow{\ \rho_{1}\ }R_{u}(G_{1})\longrightarrow1.
\]
Since $R_{u}(G)$ and $R_{u}(G_{1})$ are both connected, the discussion
following Lemma~\ref{lem:Conn-unip} shows that $R_{u}(G)=\rho_{1}^{-1}(R_{u}(G_{1}))$.
If $\mathbf{G}$ is reductive over $A$, it has connected reductive
fibre by definition, so in this case, $R_{u}(G_{1})=\{1\}$.

Surjective maps between connected linear algebraic groups are known
to send Borel subgroups to Borel subgroups (see \cite{humphreys},
21.3C), so $BG^{1}=\rho_{1}^{-1}(B_{1})$ contains a Borel subgroup
of $G$. Since $BG^{1}$ is an extension of the Borel subgroup $B_{1}$
by the unipotent (hence solvable) group $G^{1}$, it is itself solvable.
Since it contains a Borel subgroup, it must in fact equal this Borel. 
\end{proof}
Recall that a \emph{Cartan subgroup} of a linear algebraic group $G$
is defined to be the centraliser of a maximal torus in $G$. Cartan
subgroups are closed, connected, nilpotent groups, and if $G$ is
reductive, its set of Cartan subgroups coincides with its set of maximal
tori. It is well-known that any two Cartan subgroups of $G$ are conjugate
in $G$. The following is a useful characterisation of Cartan subgroups.
\begin{lem}
\label{lem:Cartan-criterion}Let $C$ be a closed, connected, nilpotent
subgroup of a linear algebraic group $G$, and suppose that $\N_{G}(C)^{\circ}=C$.
Then $C$ is a Cartan subgroup.\end{lem}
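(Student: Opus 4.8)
The plan is to reduce the statement to a question inside the centraliser of the maximal torus of $C$, and there to exploit the fact that a connected nilpotent group cannot contain a proper connected subgroup equal to the identity component of its own normaliser. Since $C$ is connected and nilpotent, it has a unique maximal torus $T$, which is central in $C$, and in fact $C=T\times R_u(C)$ (see \cite{borel} or \cite{humphreys}). Set $M=Z_G(T)$, the centraliser of $T$ in $G$; it is closed and connected (the centraliser of a torus in a connected group is connected), and $C\subseteq M$ because $T$ is central in $C$. Intersecting the normaliser hypothesis with $M$ gives $\N_M(C)^0=(\N_G(C)\cap M)^0\subseteq\N_G(C)^0=C$, and the reverse inclusion is clear since $C\subseteq M$ is connected, so $\N_M(C)^0=C$. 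Now $T$ is central in $M$, and I record the standard fact that the maximal tori of $M=Z_G(T)$ are exactly the maximal tori of $G$ containing $T$; thus it suffices to prove that $T$ is the unique maximal torus of $M$ and that $C=M$.

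First I would pass to the quotient $\overline M=M/T$, writing $\overline C=C/T$. Because $C=T\times R_u(C)$, the group $\overline C$ is connected unipotent, and since $T\subseteq C$ the normaliser $\N_M(C)$ is the full preimage of $\N_{\overline M}(\overline C)$, whence $\N_{\overline M}(\overline C)^0=\overline C$. The key step, which I expect to be the main obstacle, is then the following sub-lemma: \emph{a connected algebraic group $\overline M$ possessing a connected unipotent subgroup $\overline C$ with $\N_{\overline M}(\overline C)^0=\overline C$ must itself be unipotent.} To prove this I would argue by contraposition: if $\overline M$ is not unipotent it has a nontrivial maximal torus, hence a Borel subgroup $B=S\ltimes R_u(B)$ with $S\neq 1$. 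The unipotent subgroup $\overline C$ lies in $R_u(B)$, since all unipotent elements of a connected solvable group lie in its unipotent radical. If $\overline C\subsetneq R_u(B)$, then as $R_u(B)$ is connected nilpotent its proper connected subgroups have strictly larger identity-component normaliser, so $\N_{\overline M}(\overline C)^0\supsetneq\overline C$; and if $\overline C=R_u(B)$, then $B$, and in particular the nontrivial torus $S$, normalises $\overline C$, again forcing $\N_{\overline M}(\overline C)^0\supsetneq\overline C$. Either way the hypothesis fails, which proves the sub-lemma and hence that $\overline M=M/T$ is unipotent.

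Finally, $\overline M$ unipotent means $M$ is an extension of a unipotent group by the central torus $T$, so $M$ is connected nilpotent with unique maximal torus $T$; in particular $T$ is a maximal torus of $G$. Applying once more the principle that a connected nilpotent group has no proper connected subgroup whose normaliser has the same identity component, the relation $\N_M(C)^0=C$ with $C\subseteq M$ forces $C=M$. Therefore $C=M=Z_G(T)$ with $T$ a maximal torus of $G$, which is precisely the assertion that $C$ is a Cartan subgroup. The only inputs beyond elementary manipulation are standard structure facts (existence and centrality of the maximal torus of a connected nilpotent group, connectedness and the description of the maximal tori of $Z_G(T)$, and the growth of normalisers of proper connected subgroups in connected nilpotent groups), all available in \cite{borel} and \cite{humphreys}.
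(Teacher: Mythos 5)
Your proof is correct, but it is not comparable to the paper's proof in the usual sense, because the paper gives none: its entire proof of Lemma~\ref{lem:Cartan-criterion} is the citation ``See \cite{borel}, 12.6''. What you have written is, in substance, the standard argument underlying that citation, and your skeleton is sound: decompose $C=T\times C_u$ with $T$ the unique (hence central, hence characteristic) maximal torus of $C$; pass to $M=Z_G(T)$, where the normaliser hypothesis descends; quotient by the central torus $T$; prove via Borel subgroups that $\overline{M}=M/T$ must be unipotent, so that $M$ is connected nilpotent with unique maximal torus $T$; and conclude $C=M=Z_G(T)$ by a second application of the fact that a proper closed connected subgroup of a connected nilpotent group is properly contained in the identity component of its normaliser. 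Each auxiliary fact you invoke is genuinely standard. A few points to tighten: (i) like \cite{borel}, you implicitly assume $G$ connected (your parenthetical about centralisers of tori being connected needs this, as does the identification of $C=Z_G(T)$ with a Cartan subgroup in the paper's sense); this is harmless since the paper only applies the lemma to connected $G$, but it should be said. If you want to minimise the input here, note instead that $T$ is characteristic in $C$, so $\N_G(C)\subseteq\N_G(T)$, and rigidity of tori gives $\N_G(T)^0=Z_G(T)^0$, whence $C=\N_G(C)^0\subseteq Z_G(T)^0$; one can then run the whole argument inside $Z_G(T)^0$. (ii) In the sub-lemma, state explicitly that the Borel subgroup $B$ is chosen to \emph{contain} $\overline{C}$ (every connected solvable subgroup lies in some Borel); the nontriviality of the torus part $S$ of $B$ then follows from conjugacy of Borel subgroups together with Kolchin's theorem, which is also what justifies ``not unipotent implies nontrivial maximal torus''. (iii) The normaliser-growth fact must be used in its identity-component form, $\N_N(V)^0\supsetneq V$ for $V\subsetneq N$ closed connected and $N$ connected nilpotent, not merely $\N_N(V)\supsetneq V$; this is exactly what the lower-central-series proof delivers, since the term of the series that normalises $V$ without lying in it is connected. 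Finally, your deduction of $\N_{\overline{M}}(\overline{C})^0=\overline{C}$ from $\N_M(C)=\pi^{-1}(\N_{\overline{M}}(\overline{C}))$ uses that preimages of identity components under $\pi$ are identity components because $\ker\pi=T$ is connected; this is precisely the content of Lemma~\ref{lem:Conn-unip} and the discussion following it, so you can cite the paper itself for that step.
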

\begin{proof}
See \cite{borel}, 12.6. 
\end{proof}
Suppose that $k$ is the field of definition of the groups in the
above lemma. We remark that $\N_{G}(C)$ should be thought of as the
group $\N_{G}(C)_{\mathrm{red}}(k)$, where $\N_{G}(C)$ is the scheme
theoretic normaliser, and $\N_{G}(C)_{\mathrm{red}}$ is the reduced
group scheme associated to $\N_{G}(C)$. At the scheme level this
distinction is important because if $H$ is a closed subgroup of $G$,
the scheme theoretic normaliser $\N_{G}(H)$ may not be reduced. However,
we always have $\N_{G}(H)_{\mathrm{red}}(k)=\N_{G}(H)(k)$, since
if $\N_{G}(H)$ is represented by the ring $R$, then $\N_{G}(H)_{\mathrm{red}}$
is represented by $R/\mathrm{nil}(R)$, where $\mathrm{nil}(R)$ is
the nilradical, and every homomorphism $R\rightarrow k$ factors through
$R/\mathrm{nil}(R)$. 

We can now give the proof of our main result.
\begin{thm}
\label{thm:Cartan}Let $\mathbf{G}$ be a reductive group scheme over
$A$, and let $\mathbf{T}$ be a maximal torus in $\mathbf{G}$. Then
$T$ is a Cartan subgroup of $G$, and the map $\mathbf{T}\mapsto T$
is a bijection between the set of maximal tori in $\mathbf{G}$ and
the set of Cartan subgroups of $G$. \end{thm}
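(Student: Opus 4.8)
The plan is to prove that $T$ is a Cartan subgroup by verifying the hypotheses of the criterion in Lemma~\ref{lem:Cartan-criterion}, and then to deduce the bijectivity from the Nullstellensatz (Proposition~\ref{pro:Nullstellensatz}) together with the conjugacy of Cartan subgroups. First I would record that $T$ is a closed, connected, nilpotent subgroup of $G$. Closedness is immediate, since $\mathbf{T}$ is a closed subgroup scheme of $\mathbf{G}$, so $T=\mathcal{F}\mathbf{T}(k)$ is a closed subgroup of $G$. Since $\mathbf{T}$ is a torus, its fibre $\mathbf{T}_{k}$ is connected, whence $\mathbf{T}=\mathbf{T}^{0}$; applying Lemma~\ref{lem:Conn comp} (or directly Proposition~\ref{pro:Unip rad and B}) then shows that $T$ is connected. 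Finally, $\mathbf{T}$ is a commutative group scheme, so $T=\mathbf{T}(A)$ is abelian, hence nilpotent.

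The central step is to show that $\N_{G}(T)^{0}=T$. By Lemma~\ref{lem:Norm T - P}, the normaliser $\N_{\mathbf{G}}(\mathbf{T})$ is representable by a closed smooth subscheme of $\mathbf{G}$, so Corollary~\ref{cor:F - Norm} applies and gives $\mathcal{F}\N_{\mathbf{G}}(\mathbf{T})=\N_{\mathcal{F}\mathbf{G}}(\mathcal{F}\mathbf{T})$; passing to $k$-points (using the set-theoretic identification of normalisers over $k$ discussed after Theorem~\ref{thm:F and Tran}) identifies $\N_{G}(T)$ with $\N_{\mathbf{G}}(\mathbf{T})(A)$. Now I would apply the Greenberg functor to the identity $\N_{\mathbf{G}}(\mathbf{T})^{0}=\mathbf{T}$ of Lemma~\ref{lem:Norm T - P} and use Lemma~\ref{lem:Conn comp} to commute $\mathcal{F}$ with the formation of connected components, obtaining $\N_{G}(T)^{0}=(\mathcal{F}\N_{\mathbf{G}}(\mathbf{T}))^{0}(k)=\mathcal{F}(\N_{\mathbf{G}}(\mathbf{T})^{0})(k)=\mathcal{F}(\mathbf{T})(k)=T$. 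Lemma~\ref{lem:Cartan-criterion} then yields that $T$ is a Cartan subgroup. This is the main step of the proof: not because any single ingredient is difficult, but because it is precisely where the three key inputs, namely the representability and the equality $\N_{\mathbf{G}}(\mathbf{T})^{0}=\mathbf{T}$, the preservation of normalisers, and the preservation of connected components, must be correctly assembled.

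It remains to prove that $\mathbf{T}\mapsto T$ is a bijection. Injectivity is a direct application of Proposition~\ref{pro:Nullstellensatz}: if $\mathbf{T}$ and $\mathbf{T}'$ are maximal tori with $T=T'$, that is $\mathbf{T}(A)=\mathbf{T}'(A)$ as subsets of $\mathbf{G}(A)$, then since tori are closed smooth subschemes of $\mathbf{G}$ with reduced fibres, the Nullstellensatz forces $\mathbf{T}=\mathbf{T}'$. For surjectivity, let $C$ be an arbitrary Cartan subgroup of $G$. Since $T$ is a Cartan subgroup and any two Cartan subgroups of $G$ are conjugate, there is some $g\in G=\mathbf{G}(A)$ with $C=gTg^{-1}$. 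Setting $\mathbf{T}'=\ad(g)(\mathbf{T})=g\mathbf{T}g^{-1}$, which is again a maximal torus of $\mathbf{G}$ because $\ad(g)$ is an automorphism of $\mathbf{G}$ and maximality, the torus property, and smoothness are all preserved on geometric fibres, and using that $\mathcal{F}$ preserves the conjugation action so that $\mathcal{F}(\ad(g))=\ad(g)$, I obtain $T'=\ad(g)(T)=gTg^{-1}=C$. Thus every Cartan subgroup of $G$ arises as $T'$ for a maximal torus $\mathbf{T}'$ of $\mathbf{G}$, which completes the proof.
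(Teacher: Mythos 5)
Your proof is correct and follows essentially the same route as the paper's: you establish $\N_{G}(T)^{0}=T$ by assembling Lemma~\ref{lem:Norm T - P}, Corollary~\ref{cor:F - Norm} and Lemma~\ref{lem:Conn comp}, then invoke Lemma~\ref{lem:Cartan-criterion}, with injectivity from Proposition~\ref{pro:Nullstellensatz} and surjectivity from the conjugacy of Cartan subgroups together with $\mathcal{F}(\ad(g))=\ad(g)$, exactly as in the paper. The only differences are cosmetic, such as spelling out the closedness, connectedness and nilpotency of $T$ in slightly more detail.
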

\begin{proof}
By Corollary~\ref{cor:F - Norm}, Lemma~\ref{lem:Norm T - P}, and
Lemma~\ref{lem:Conn comp}, we have
\begin{multline*}
\N_{G}(T)^{\circ}=\N_{\mathcal{F}\mathbf{G}(k)}(\mathcal{F}\mathbf{T}(k))^{\circ}\\
=\N_{\mathcal{F}\mathbf{G}}(\mathcal{F}\mathbf{T})(k)^{\circ}=\N_{\mathcal{F}\mathbf{G}}(\mathcal{F}\mathbf{T})^{\circ}(k)=\mathcal{F}(\N_{\mathbf{G}}(\mathbf{T})^{\circ})(k)\\
=\mathcal{F}\mathbf{T}(k)=T.
\end{multline*}
Moreover, $T$ is connected, and $\mathbf{T}$ is a commutative group
scheme, so $T$ is abelian, hence nilpotent. Lemma~\ref{lem:Cartan-criterion}
now shows that $T$ is a Cartan subgroup of $G$.

Any Cartan subgroup $T'$ in $G$ is conjugate to $T$, that is, there
exists an element $g\in G$ such that, $T'=gTg^{-1}$. Then $\ad(g)(\mathbf{T})$
is a maximal torus in $\mathbf{G}$. Recall from Section~\ref{sec:Transp-norm}
that $\mathcal{F}$ preserves the conjugation action. Thus $\mathcal{F}(\ad(g))=\ad(g)$,
and we get $\mathcal{F}(\ad(g)(\mathbf{T}))=\ad(g)\mathcal{F}\mathbf{T}$.
Hence 
\[
\mathcal{F}(\ad(g)(\mathbf{T}))(k)=(\ad(g)(\mathcal{F}\mathbf{T}))(k)=g\mathcal{F}\mathbf{T}(k)g^{-1}=T'.
\]
This shows that the map $\mathbf{T}\mapsto T$ is surjective. To see
that it is injective, suppose that $\mathbf{T}$ and $\mathbf{S}$
are two maximal tori such that $T=\mathcal{F}\mathbf{T}(k)=\mathcal{F}\mathbf{S}(k)=S$.
Then $\mathbf{T}(A)=\mathbf{S}(A)$, and Proposition~\ref{pro:Nullstellensatz}
implies that $\mathbf{T}=\mathbf{S}$. $ $\end{proof}
\begin{rem*}
It may be tempting to try to prove that $T$ is a Cartan subgroup
of $G$ by showing directly that $T$ is the centraliser of a maximal
torus $T_{0}$ in $G$. By showing that the Greenberg functor preserves
centralisers, and using that $\C_{\mathbf{G}}(\mathbf{T})=\mathbf{T}$,
one could show that $\C_{G}(T)=T$. However, $T$ is not a maximal
torus of $G$ in general, or even of multiplicative type, so it does
not follow from this alone that $T$ is a Cartan subgroup. Moreover,
in general the maximal torus $T_{0}$ in $T$ will not be of the form
$(\mathcal{F}\mathbf{S})(k)$ for any torus $\mathbf{S}$ in $\mathbf{G}$
over $A$. This is the reason why we have proved Theorem~\ref{thm:Cartan}
using connected components of normaliser group schemes and the characterisation
of Cartan subgroups given by Lemma~\ref{lem:Cartan-criterion}.
\end{rem*}
Proposition~\ref{pro:Unip rad and B} and Theorem~\ref{thm:Cartan}
provide a vast generalisation of parts of a result of Hill (\cite{Hill_regular},
Proposition~2.2). 

As a consequence of Theorem~\ref{thm:Cartan}, show next how Proposition~\ref{pro:F and Tran}
can be strengthened in certain cases from a statement about equality
of $k$-points of schemes to equality of the schemes themselves (the
point being that the schemes in question are reduced over $k$).
\begin{prop}
Let $\mathbf{G}$ be a reductive group scheme over $A$. Let $\mathbf{H}$
and $\mathbf{H}'$ be smooth subgroup schemes of $\mathbf{G}$, and
let $\mathbf{T}$ and $\mathbf{T}'$ be maximal tori of $\mathbf{G}$.
Consider the action of $\mathbf{G}$ on itself by conjugation. Then
\[
\mathcal{F}\T_{\mathbf{G}}(\mathbf{T},\mathbf{H})=\T_{\mathcal{F}\mathbf{G}}(\mathcal{F}\mathbf{T},\mathcal{F}\mathbf{H}).
\]
Assume moreover that $ $$\mathbf{H}$ contains $\mathbf{T}$ and
$\mathbf{H}'$ contains $\mathbf{T}'$, respectively, and that $\mathbf{H}_{k}$
and $\mathbf{H}_{k}'$ are connected. Then 
\[
\mathcal{F}\T_{\mathbf{G}}(\mathbf{H},\mathbf{H}')=\T_{\mathcal{F}\mathbf{G}}(\mathcal{F}\mathbf{H},\mathcal{F}\mathbf{H}').
\]
\end{prop}
\begin{proof}
Since $\mathbf{G}$ is reductive, $\mathbf{T}$ is a Cartan subgroup
of $\mathbf{G}$, and by Theorem~\ref{thm:Cartan} $\mathcal{F}\mathbf{T}$
is a Cartan subgroup of $\mathcal{F}\mathbf{G}$. By \cite{SGA3},
XII~7.8, $\T_{\mathbf{G}}(\mathbf{T},\mathbf{H})$ is representable
by a closed smooth subscheme of $\mathbf{G}$, and $\T_{\mathcal{F}\mathbf{G}}(\mathcal{F}\mathbf{T},\mathcal{F}\mathbf{H})$
is representable by a closed smooth subscheme of $\mathcal{F}\mathbf{G}$.
Since $\T_{\mathcal{F}\mathbf{G}}(\mathcal{F}\mathbf{T},\mathcal{F}\mathbf{H})$
is smooth over an algebraically closed field, it is in particular
reduced. The assertion $\mathcal{F}\T_{\mathbf{G}}(\mathbf{T},\mathbf{H})=\T_{\mathcal{F}\mathbf{G}}(\mathcal{F}\mathbf{T},\mathcal{F}\mathbf{H})$
then follows from the equality of $k$-points given by Proposition~\ref{pro:F and Tran}.

Assume moreover that $\mathbf{H}$ contains $\mathbf{T}$, $\mathbf{H}'$
contains $\mathbf{T}'$, and that $\mathbf{H}_{k}$ and $\mathbf{H}_{k}'$
are connected. Then $\mathbf{H}$ and $\mathbf{H}'$ are subgroups
of type (R) of $\mathbf{G}$ (see \cite{SGA3}, XXII~5.2.1 for the
notion of subgroup of type (R)). Furthermore, by Theorem~\ref{thm:Cartan}
$\mathcal{F}\mathbf{T}$ and $\mathcal{F}\mathbf{T}'$ are Cartan
subgroups of $\mathcal{F}\mathbf{G}$, so $\mathcal{F}\mathbf{H}$
and $\mathcal{F}\mathbf{H}'$ are subgroups of type (R) of $\mathcal{F}\mathbf{G}$.
By \cite{SGA3}, XXII~5.3.9, $\T_{\mathbf{G}}(\mathbf{H},\mathbf{H}')$
is representable by a closed smooth subscheme of $\mathbf{G}$, and
$\T_{\mathcal{F}\mathbf{G}}(\mathcal{F}\mathbf{H},\mathcal{F}\mathbf{H}')$
is representable by a closed smooth subscheme of $\mathcal{F}\mathbf{G}$.
The assertion $\mathcal{F}\T_{\mathbf{G}}(\mathbf{H},\mathbf{H}')=\T_{\mathcal{F}\mathbf{G}}(\mathcal{F}\mathbf{H},\mathcal{F}\mathbf{H}')$
then follows from Proposition~\ref{pro:F and Tran} in the same way
as above.
\end{proof}
We conclude with some further results mentioned in the introduction.
\begin{prop}
\label{pro:N(P)-B}Let $\mathbf{G}$ be a reductive group scheme over
$A$, and let $\mathbf{P}$ be a parabolic subgroup of $\mathbf{G}$.
Then $\N_{G}(P)=P$. Moreover, let $\mathbf{B}$ and $\mathbf{B}'$
be two Borel subgroups of $\mathbf{G}$. Then $B$ and $B'$ are conjugate
in $G$. \end{prop}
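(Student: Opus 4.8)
The plan is to derive both statements from the machinery already developed, principally Corollary~\ref{cor:F - Norm} (that $\mathcal{F}$ preserves normalisers of smooth subgroup schemes with representable normaliser), Lemma~\ref{lem:Norm T - P} (that $\N_{\mathbf{G}}(\mathbf{P})=\mathbf{P}$ and is representable by a closed smooth subscheme), and Proposition~\ref{pro:Unip rad and B} together with the conjugacy of maximal tori in reductive group schemes. For the first assertion $\N_G(P)=P$, I would first observe that $\mathbf{P}$ is a parabolic subgroup, hence smooth over $A$ with reduced (indeed connected) fibre, and that by Lemma~\ref{lem:Norm T - P} its normaliser $\N_{\mathbf{G}}(\mathbf{P})$ is representable by a closed smooth subscheme and equals $\mathbf{P}$ itself. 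Applying Corollary~\ref{cor:F - Norm} then yields
\[
\mathcal{F}\N_{\mathbf{G}}(\mathbf{P})=\N_{\mathcal{F}\mathbf{G}}(\mathcal{F}\mathbf{P}).
\]
Evaluating on $k$-points and using that $\mathcal{F}$ preserves the conjugation action (so set-theoretic normalisers in $G=\mathcal{F}\mathbf{G}(k)$ agree with the scheme-theoretic ones, as recorded in the discussion after Theorem~\ref{thm:F and Tran}), I obtain $\N_G(P)=\N_{\mathbf{G}}(\mathbf{P})(A)=\mathbf{P}(A)=P$. The only point requiring care is the identification $\N_{\mathcal{F}\mathbf{G}}(\mathcal{F}\mathbf{P})(k)=\N_G(P)$, i.e.\ that the normaliser group functor evaluated at $k$ recovers the ordinary group-theoretic normaliser; this is precisely the classical Nullstellensatz statement for $\mathcal{F}\mathbf{G}$ over the algebraically closed field $k$, already invoked in the proof of Theorem~\ref{thm:F and Tran}.

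For the second assertion, the conjugacy of the strict Borels $B$ and $B'$, I would reduce to the two-step structure of $G$ furnished by Proposition~\ref{pro:Unip rad and B}. By that proposition $BG^1$ and $B'G^1$ are Borel subgroups of the connected linear algebraic group $G$, and any two Borel subgroups of $G$ are conjugate, so there is $g\in G$ with $g(BG^1)g^{-1}=B'G^1$. Writing $g_{\mathfrak m}=\rho_1(g)$ for its image in $G_1=\mathbf{G}_k(k)$, the reduction map carries this to $g_{\mathfrak m}B_1 g_{\mathfrak m}^{-1}=B_1'$, where $B_1,B_1'$ are the fibres, which are Borel subgroups of the reductive group $\mathbf{G}_k$. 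The remaining task is to upgrade this conjugacy at the residue level to an honest conjugacy of $\mathbf{B}$ and $\mathbf{B}'$ as subgroup schemes over $A$: here I would use that over a local ring any two Borel subgroups of a reductive group scheme are conjugate by an element of $\mathbf{G}(A)$ (this is the relative Borel conjugacy theorem of SGA~3, available since the residue field is algebraically closed and $A$ is Artinian local, so $\mathbf{G}$ has a Borel and the relevant transporter scheme is smooth surjective). Thus there exists $h\in\mathbf{G}(A)=G$ with $\ad(h)(\mathbf{B})=\mathbf{B}'$, and applying $\mathcal{F}$ and evaluating on $k$-points gives $hBh^{-1}=B'$ exactly as in the surjectivity argument in the proof of Theorem~\ref{thm:Cartan}.

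The main obstacle is the second assertion, and specifically ensuring that Borel conjugacy is available over the Artinian local base $A$ rather than merely over the residue field. The fibrewise conjugacy of $B_1$ and $B_1'$ is immediate from classical theory, but lifting a conjugating element from $\mathbf{G}_k(k)$ to $\mathbf{G}(A)$ requires the smoothness (hence the infinitesimal lifting property) of the transporter scheme $\T_{\mathbf{G}}(\mathbf{B},\mathbf{B}')$ together with its surjectivity onto $\Spec A$. I expect this to follow from the SGA~3 results on parabolic and Borel subgroups already cited in Lemma~\ref{lem:Norm T - P} (in particular the smoothness of transporters between parabolic subgroups, XXVI~1.2 and XXII~5.3.9), so that the conjugacy over $A$ is a direct consequence rather than something requiring separate argument. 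Once the $A$-level conjugacy is in hand, the transfer to $G$ via $\mathcal{F}$ is routine, since $\mathcal{F}$ commutes with $\ad(h)$ for $h\in\mathbf{G}(A)$.
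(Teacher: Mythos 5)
Your proposal is correct and takes essentially the same approach as the paper: the first assertion is exactly the chain Corollary~\ref{cor:F - Norm} + Lemma~\ref{lem:Norm T - P} + the classical Nullstellensatz, and the second rests, as in the paper, on the smoothness of the strict transporter $\T_{\mathbf{G}}(\mathbf{B},\mathbf{B}')$ (SGA~3, XXII 5.3.9), which lifts a $k$-point (existing by classical Borel conjugacy) to an $A$-point, then transfers to $G$ via $\mathcal{F}$. The only differences are cosmetic: your detour through the Borel subgroups $BG^{1}$, $B'G^{1}$ of $G$ via Proposition~\ref{pro:Unip rad and B} is redundant (as you yourself note, conjugacy of $B_{1}$ and $B_{1}'$ is immediate classically), and your final transfer via $\mathcal{F}(\ad(h))=\ad(h)$ uses only the easy direction, where the paper instead invokes the two-sided identification of Theorem~\ref{thm:F and Tran}.
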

\begin{proof}
From Corollary~\ref{cor:F - Norm} and the fact that $\N_{\mathbf{G}}(\mathbf{P})=\mathbf{P}$
(see Lemma~\ref{lem:Norm T - P}), we get
\[
\N_{G}(P)=\N_{\mathcal{F}\mathbf{G}(k)}(\mathcal{F}\mathbf{P}(k))=\N_{\mathcal{F}\mathbf{G}}(\mathcal{F}\mathbf{P})(k)=\mathcal{F}\N_{\mathbf{G}}(\mathbf{P})(k)=\mathcal{F}\mathbf{P}(k)=P.
\]
By \cite{SGA3}, XXII~5.3.9, the strict transporter $\T_{\mathbf{G}}(\mathbf{B},\mathbf{B}')$
is representable by a smooth scheme over $A$. Hence the reduction
map 
\[
\T_{\mathbf{G}}(\mathbf{B},\mathbf{B}')(A)\longrightarrow\T_{\mathbf{G}}(\mathbf{B},\mathbf{B}')(k)
\]
is surjective. Since the formation of transporters commutes with base
extension, we have 
\[
\T_{\mathbf{G}}(\mathbf{B},\mathbf{B}')(k)\cong\T_{\mathbf{G}}(\mathbf{B},\mathbf{B}')_{k}(k)\cong\T_{\mathbf{G}_{k}}(\mathbf{B}_{k},\mathbf{B}'_{k})(k)=\T_{\mathbf{G}_{k}(k)}(\mathbf{B}_{k}(k),\mathbf{B}'_{k}(k)).
\]
By definition, $\mathbf{B}_{k}(k)$ and $\mathbf{B}'_{k}(k)$ are
Borel subgroups in $\mathbf{G}{}_{k}(k)$, and it is well-known that
any two Borel subgroups of a linear algebraic group are conjugate.
Thus $\T_{\mathbf{G}}(\mathbf{B},\mathbf{B}')(k)$ is non-empty, and
so $\T_{\mathbf{G}}(\mathbf{B},\mathbf{B}')(A)$ is non-empty. By
Proposition~\ref{pro:F and Tran}, we have
\begin{multline*}
\T_{\mathbf{G}}(\mathbf{B},\mathbf{B}')(A)\\
=\mathcal{F}\T_{\mathbf{G}}(\mathbf{B},\mathbf{B}')(k)=\T_{\mathcal{F}\mathbf{G}}(\mathcal{F}\mathbf{B},\mathcal{F}\mathbf{B}')(k)=\T_{\mathcal{F}\mathbf{G}(k)}(\mathcal{F}\mathbf{B}(k),\mathcal{F}\mathbf{B}'(k))\\
=\T_{G}(B,B'),
\end{multline*}
and so $\T_{G}(B,B')$ is non-empty. Hence there exists an element
in $G$ that conjugates $B$ to $B'$.\end{proof}
\begin{acknowledgement*}
This work was supported at different times by EPSRC Grants GR/T21714/01
and EP/F044194/1, respectively. The author is grateful to T.~Ekedahl
for helpful discussions.
\end{acknowledgement*}
\bibliographystyle{alex}
\bibliography{alex}

\def\cprime{$'$}
\begin{thebibliography}{10}

\bibitem{borel}
A.~Borel, \emph{Linear Algebraic Groups}, volume 126 of \emph{Graduate Texts in
  Mathematics}, Springer-Verlag, second edition, 1991.

\bibitem{Neron_models}
S.~Bosch, W.~L{\"u}tkebohmert, and M.~Raynaud, \emph{N\'eron Models}, volume~21
  of \emph{Ergebnisse der Mathematik und ihrer Grenzgebiete (3)},
  Springer-Verlag, 1990.

\bibitem{delignelusztig}
P.~Deligne and G.~Lusztig, \emph{Representations of reductive groups over
  finite fields}, Ann. of Math. (2) \textbf{103} (1976), no.~1, 103--161.

\bibitem{Demazure}
M.~Demazure, \emph{Sch\'emas en groupes r\'eductifs}, Bull. Soc. Math. France
  \textbf{93} (1965), 369--413.

\bibitem{Demazure-Gabriel}
M.~Demazure and P.~Gabriel, \emph{Groupes alg\'ebriques. {T}ome {I}:
  {G}\'eom\'etrie alg\'ebrique, g\'en\'eralit\'es, groupes commutatifs}, Masson
  \& Cie, \'Editeur, Paris, 1970.

\bibitem{Demazure-Gabriel-English}
M.~Demazure and P.~Gabriel, \emph{Introduction to Algebraic Geometry and
  Algebraic Groups}, volume~39 of \emph{North-Holland Mathematics Studies},
  North-Holland Publishing Co., Amsterdam, 1980.

\bibitem{geoschemes}
D.~Eisenbud and J.~Harris, \emph{The Geometry of Schemes}, Springer-Verlag, New
  York, 2000.

\bibitem{greenberg1}
M.~J. Greenberg, \emph{Schemata over local rings}, Ann. of Math. (2)
  \textbf{73} (1961), 624--648.

\bibitem{greenberg2}
M.~J. Greenberg, \emph{Schemata over local rings {I}{I}}, Ann. of Math. (2)
  \textbf{78} (1963), 256--266.

\bibitem{EGA-IV}
A.~Grothendieck, \emph{\'{E}l\'ements de g\'eom\'etrie alg\'ebrique. {IV}.
  \'{E}tude locale des sch\'emas et des morphismes de sch\'emas {I--IV}}, Inst.
  Hautes \'Etudes Sci. Publ. Math.  (1964--1967), no. 20,24,28,32.

\bibitem{Hill_regular}
G.~Hill, \emph{Regular elements and regular characters of {${\rm GL}\sb
  n(\mathcal{O})$}}, J. Algebra \textbf{174} (1995), no.~2, 610--635.

\bibitem{humphreys}
J.~E. Humphreys, \emph{Linear Algebraic Groups}, Springer-Verlag, New York,
  1975.

\bibitem{Jantzen}
J.~C. Jantzen, \emph{Representations of Algebraic Groups}, volume 107 of
  \emph{Mathematical Surveys and Monographs}, American Mathematical Society,
  Providence, RI, 2003.

\bibitem{Loeser-Sebag}
F.~Loeser and J.~Sebag, \emph{Motivic integration on smooth rigid varieties and
  invariants of degenerations}, Duke Math. J. \textbf{119} (2003), no.~2,
  315--344.

\bibitem{Lusztig-Fin-Rings}
G.~Lusztig, \emph{Representations of reductive groups over finite rings},
  Represent. Theory \textbf{8} (2004), 1--14.

\bibitem{Nicaise-Sebag}
J.~Nicaise and J.~Sebag, \emph{Motivic {S}erre invariants and {W}eil
  restriction}, J. Algebra \textbf{319} (2008), no.~4, 1585--1610.

\bibitem{SGA3}
{S}{G}{A}{\ }{3}, \emph{Sch\'emas en groupes}, I--III, dirig\'e par M. Demazure
  et A. Grothendieck. LNM 151--153, Springer-Verlag, Berlin, 1970.

\bibitem{Alex_Unramified_reps}
A.~Stasinski, \emph{Unramified representations of reductive groups over finite
  rings}, Represent. Theory \textbf{13} (2009), 636--656.

\bibitem{EDLs}
A.~Stasinski, \emph{Extended {D}eligne-{L}usztig varieties for general and
  special linear groups}, Adv. Math. \textbf{226} (2011), 2825--2853.

\bibitem{Steinberg-End}
R.~Steinberg, \emph{Endomorphisms of linear algebraic groups}, Memoirs of the
  American Mathematical Society, No. 80, American Mathematical Society,
  Providence, R.I., 1968.

\bibitem{waterhouse}
W.~C. Waterhouse, \emph{Introduction to Affine Group Schemes}, Springer-Verlag,
  New York, 1979.

\end{thebibliography}

\end{document}